\theoremstyle{plain}
\newtheorem{theorem}{Theorem}[section]
\newtheorem{lemma}[theorem]{Lemma}
\newtheorem{cor}[theorem]{Corollary}
\newtheorem{prop}[theorem]{Proposition}
\newtheorem{conjecture}[theorem]{Conjecture}
\newtheorem*{lemma*}{Lemma}
\newtheorem*{cor*}{Corollary}
\newtheorem*{theorem*}{Theorem}
\theoremstyle{definition}
\newtheorem{problem*}{Problem}
\newtheorem{example}{Example}
\newtheorem{definition}[theorem]{Definition}
\theoremstyle{remark}
\newtheorem*{fact*}{Fact}
\newtheorem*{remark}{Remark}
\let\oldproofname=\proofname
\renewcommand{\proofname}{\rm\bf{\oldproofname}}
\newcommand{\R}{\mathbb R}
\newcommand{\Q}{\mathbb Q}
\newcommand{\Z}{\mathbb Z}
\newcommand{\F}{\mathbb F}
\newcommand{\phii}{\varphi}
\newcommand{\ep}{\varepsilon}
\providecommand*{\twoheadrightarrowfill@}{%
  \arrowfill@\relbar\relbar\twoheadrightarrow
}
\providecommand*{\twoheadleftarrowfill@}{%
  \arrowfill@\twoheadleftarrow\relbar\relbar
}
\providecommand*{\xtwoheadrightarrow}[2][]{%
  \ext@arrow 0579\twoheadrightarrowfill@{#1}{#2}%
}
\providecommand*{\xtwoheadleftarrow}[2][]{%
  \ext@arrow 5097\twoheadleftarrowfill@{#1}{#2}%
}
\newcommand{\inj}{\hookrightarrow}
\newcommand{\op}[1]{\operatorname{{#1}}}
\newcommand{\mc}[1]{\mathcal{{#1}}}
\newcommand{\Hom}{\op{Hom}}
\newcommand{\Gal}{\op{Gal}}
\newcommand{\Ext}{\op{Ext}}
\newcommand{\GL}{\op{GL}}
\newcommand{\abs}[1]{\left\lvert#1\right\rvert}
\begin{document}
    \title{The Whitehead group and stably trivial $G$-smoothings}
    \author{Oliver H. Wang}
	\maketitle

\begin{abstract}
A closed manifold $M$ of dimension at least $5$ has only finitely many smooth structures.
Moreover, smooth structures of $M$ are in bijection with smooth structures of $M\times\R$.
Both of these statements are false equivariantly.
In this paper, we use controlled $h$-cobordisms to construct infinitely many $G$-smoothings of a $G$-manifold $X$.
Moreover, these $G$-smoothings are isotopic after taking a product with $\R$.
\end{abstract}

\section{Introduction}
Let $G$ be a finite group.
A $G$-smoothing of a $G$-manifold $X$ consists of a pair $(Y,f)$ where $Y$ is a smooth $G$-manifold and $f:Y\rightarrow X$ is a $G$-homeomorphism.
If $Y$ is a smooth $G$-manifold, let $Y\times I$ denote the product smooth $G$-manifold where $G$ acts on $I$ trivially.
Two $G$-smoothings $(Y_i,f_i)$, $i=0,1$ are \emph{isotopic} if there is a $G$-homeomorphism $\alpha:Y_0\times I\rightarrow X\times I$ such that the following hold:
\begin{itemize}
\item $\alpha(-,t)$ is a $G$-homeomorphism $Y_0\times\{t\}\rightarrow X\times\{t\}$,
\item $\alpha(-,0)=f_0$ and
\item the composition $f_1^{-1}\circ\alpha(-,1):Y_0\rightarrow Y_1$ is a $G$-diffeomorphism.
\end{itemize}
In this paper, $G$-smoothings are considered up to isotopy.

As in classical smoothing theory, isotopy classes of $G$-smoothings can be classified by solutions to a lifting problem \cite{LashofRothenberg}.
However, unlike classical smoothing theory, closed $G$-manifolds may have infinitely many $G$-smoothings.
In \cite{SchultzExotic} and \cite{WangChern}, examples of closed $G$-manifolds with infinitely many $G$-smoothings are constructed by replacing the normal $G$-vector bundle of the fixed set with a non-isomorphic $G$-vector bundle.
In the current paper, we construct, for certain $G$-manifolds $X$, infinitely many non-isotopic $G$-smoothings whose fixed sets have the same normal bundle.
Rather than replacing the normal bundle of the fixed set, we replace a neighborhood of the unit sphere bundle of the normal bundle with an equivariant $h$-cobordism.

A key theorem in smoothing theory, proven by Kirby--Siebenmann, is the product structure theorem.
A smooth structure on $X$ gives a smooth structure on $X\times\R$.
The product structures theorem states that is a bijection when $X$ is a high dimensional manifold.
It is shown in \cite{WangChern} that an equivariant version of the stabilization map in the product structure theorem is not generally surjective.
Indeed, if $M$ is a $\Z/p$-manifold with a trivial action, then it has only finitely many $\Z/p$-smoothings.
But, if $H^2(M;\Q)\neq0$ and $2$ has odd order in $(\Z/p)^\times$, then $M\times(\R[\Z/p]/\R)^{\dim M}$ has infinitely many $\Z/p$-smoothings.
The $G$-smoothings in the present paper show that this assignment need not be injective.
If $X$ is a smooth $G$-manifold and $(Y,f)$ is a $G$-smoothing of $X$, then we say $(Y,f)$ is \emph{stably trivial} if there is a representation $\rho$ such that $f\times\op{id}:Y\times\rho\rightarrow X\times\rho$ is isotopic to the identity.

Our main theorem is the following.

\begin{restatable}{theorem}{MainTheorem}\label{thm: Main}
Let $G$ be an odd order cyclic group of order at least $5$.
Let $X$ be a smooth, compact, connected, semifree $G$-manifold and let $M$ be a component of the fixed point set.
Suppose the following conditions hold:
\begin{itemize}
\item $M$ is closed, aspherical and $\pi_1$-injective,
\item $\pi_1 M$ and $\pi_1 X$ satisfy the $K$-theoretic Farrell--Jones Conjecture and
\item Each component of $X^G$ has codimension at least $2$.
\end{itemize}
Then, there are infinitely many stably trivial $G$-smoothings of $X$ if either of the following hold:
\begin{enumerate}
\item $M$ (and, hence $X$) is odd dimensional.
\item $M$ is even dimensional, $H^2(M;\Q)\neq0$ and there are distinct prime factors $p_i,p_j$ of $\abs{G}$ such that $p_i$ has odd order in $(\Z/p_j)^\times$.
\end{enumerate}
\end{restatable}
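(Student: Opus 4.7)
The plan is to construct the smoothings by inserting equivariant $h$-cobordisms along the unit sphere bundle of the normal bundle to $M$, as announced in the introduction. Since $G$ acts semifreely on $X$ with $M$ a component of $X^G$, a $G$-equivariant tubular neighborhood of $M$ has the form of a disk bundle $D(\nu)$ for some $G$-vector bundle $\nu \to M$ on which $G$ acts fiberwise by a fixed-point-free representation $\rho$; in particular, the unit sphere bundle $S(\nu)$ is a free $G$-manifold. Given a free $G$-$h$-cobordism $W$ from $S(\nu)$ to itself, I cut $X$ along $S(\nu)$ and reglue with $W$ inserted to form a smooth $G$-manifold $Y_W$. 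Passing to the quotient $S(\nu)/G$, the equivariant controlled $h$-cobordism theorem should provide a $G$-homeomorphism $f_W \colon Y_W \to X$ which agrees with the identity outside the modified collar, producing the desired $G$-smoothing.

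Such free $G$-$h$-cobordisms on $S(\nu)$ are classified up to smooth equivalence rel boundary by $\op{Wh}(\Gamma)$, where $\Gamma = \pi_1(S(\nu)/G)$ is an extension of $G$ by $\pi_1 S(\nu)$. Since the fixed set has codimension at least $2$, asphericity of $M$ identifies $\pi_1 S(\nu)$ either with $\pi_1 M$ (if $\dim\rho \geq 3$) or with an extension of $\pi_1 M$ by $\Z$ (if $\dim\rho=2$); combined with the $\pi_1$-injectivity hypothesis, the Farrell--Jones conjecture for $\pi_1 M$ and $\pi_1 X$ propagates to $\Gamma$. I would then compute $\op{Wh}(\Gamma)$ via the Farrell--Jones assembly map, whose input is the $K$-theory of finite subgroups of $\Gamma$ — dominantly $\op{Wh}(G)$ itself, which has infinite rank for $G$ cyclic of odd order at least $5$. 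The contribution of $G$ to $\op{Wh}(\Gamma)$ is detected homologically by groups of the form $H^*(M;\op{Wh}(G))$, and the case split in the theorem reflects exactly which homological degree contains an infinite-rank summand: in case (1), odd-dimensionality of $M$ directly provides it; in case (2), $H^2(M;\Q)\neq 0$ and the arithmetic condition on prime factors of $\abs{G}$ are designed to pick out a surviving $H^2$-summand.

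Having produced infinitely many classes $\tau_k \in \op{Wh}(\Gamma)$ with corresponding smoothings $(Y_k, f_k)$, I next verify pairwise non-isotopy by extracting $\tau_k$ as an isotopy invariant: a $G$-isotopy between two such smoothings would assemble into a free $G$-$h$-cobordism of Whitehead torsion $\tau_i-\tau_j$, forcing $\tau_i=\tau_j$. For stable triviality I exhibit a representation $\rho'$ for which the stabilization map $\op{Wh}(\Gamma)\to\op{Wh}(\Gamma')$ sends $\tau_k$ to zero, where $\Gamma'$ is the fundamental group of the stabilized quotient. Geometrically, this expresses $W_k \times D(\rho')$ as a trivial $h$-cobordism, so that $(Y_k\times\rho',\,f_k\times\op{id})$ is $G$-isotopic to the identity smoothing of $X\times\rho'$.

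The main obstacle is the simultaneous $K$-theoretic control required: I need an infinite family of classes in $\op{Wh}(\Gamma)$ which are detected by an isotopy invariant but are each killed by a finite stabilization. Producing such families requires carefully unpacking the Farrell--Jones assembly map to isolate summands of $\op{Wh}(\Gamma)$ that behave well under both detection and stabilization, and the dimensional and arithmetic hypotheses in the two cases of the theorem are precisely what make this trade-off possible.
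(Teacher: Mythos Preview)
Your construction is right in outline, but two genuine gaps would block the argument. First, the $h$-cobordism $W$ must be \emph{inertial}---both ends diffeomorphic to $E=S(\nu)/G$---or the regluing $Y_W$ does not make sense; you give no mechanism for this. The paper's mechanism is the duality involution $\tau_1$ on $\op{Wh}_1(\pi_1 E)$: strongly inertial $h$-cobordisms have finite index in the $(-1)^{\dim X}\tau_1$-fixed subgroup. The case split in the theorem is precisely about matching this sign to an available eigenspace of the involution on $H_1(M;\op{Wh}(F))_{(0)}$. The $H_0(M;\op{Wh}_1(G))$ summand lies in the $(-1)$-eigenspace (Bak's theorem says $\tau_1=-1$ on $\op{Wh}_1(G)$ for cyclic $G$), and since $\abs{G}\ge 5$ forces $\op{Wh}_1(G)$ infinite, this handles case~(1). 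The $H_2(M;K_{-1}(\Z[G]))$ summand has a nontrivial $(+1)$-eigenspace exactly when $H^2(M;\Q)\neq 0$ and the arithmetic condition on prime factors holds (this is the content of the appendix), giving case~(2). Your reading of the split (``odd-dimensionality directly provides it'') misses that the dichotomy is about the sign of the involution, not about which homological degree is nonzero.

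Second, your plan for stable triviality---choose $\rho'$ so that $\tau_k\mapsto 0$ in $\op{Wh}(\Gamma')$---cannot work: the relevant maps of Whitehead groups induced by stabilization are split injective, so no finite product kills the torsion. The paper instead restricts from the outset to $h$-cobordisms \emph{controlled over $M$}, which are classified by $H_1(M;\op{Wh}(F))$ and inject into $\op{Wh}(\Gamma)$ via assembly. Control is what makes the swindle producing the $G$-homeomorphism $f_W$ work, and it is also what delivers stable triviality: Quinn's controlled theory gives a controlled isomorphism $W\times S^1\cong E\times I\times S^1$ (using $\chi(S^1)=0$), and passing to the $\Z$-cover shows $W\times\R$ is already a smooth product, so $\rho'=\R$ works uniformly for every such $W$. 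Thus the ``simultaneous $K$-theoretic control'' you correctly identify as the crux is resolved not by isolating special summands of $\op{Wh}(\Gamma)$ that die under stabilization, but by working in the controlled category throughout and using the involution to guarantee inertiality.
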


We construct these $G$-smoothings from certain elements of the Whitehead group.
The $K$-theoretic Farrell--Jones conjecture for $M$ allows us to understand parts of the Whitehead group $\op{Wh}_1(\pi_1M\times G)$ by considering the homology of $M$ with coefficients in the lower $K$-theory of $\Z[G]$.
The $G$-smoothings in the first case of Theorem \ref{thm: Main} come from $H_0(M;\op{Wh}_1(G))$ whereas the $G$-smoothings in the second case come from $H_2(M;K_{-1}(\Z[G]))$.

\begin{remark}
An important subtlety in the definition of an isotopy is that we require $Y_0\times I$ to be the product smooth $G$-manifold.
Indeed, there are ways of giving the topological $G$-manifold $X\times I$ the structure of a smooth $G$-manifold so that it is not $G$-diffeomorphic to $Y_0\times I$ for any smooth $G$-manifold $Y_0$ \cite{BrowderHsiangProblem}.
This contrasts with the non-equivariant situation where the product smoothing gives a bijection between isotopy classes of smoothings on $X$ and isotopy classes of smoothings on $X\times I$ provided $\dim X\ge5$.
\end{remark}

\begin{remark}
Both the smoothings constructed in Theorem \ref{thm: Main} and those constructed in \cite{SchultzExotic} and \cite{WangChern} involve the second cohomology of the fixed point set and the order of elements in $(\Z/p)^\times$.
Though we believe this is coincidental, it would be very interesting if there were some deeper number theoretic or homotopy theoretic reason.
\end{remark}

We give some examples of $G$-manifolds where Theorem \ref{thm: Main} may be applied.

\begin{example}
When $G=\Z/p$, we may take $X=(M^{2n+1})^{\times p}$ with $G$ acting by permuting the coordinates.
By the first case of Theorem \ref{thm: Main}, this has infinitely many stably trivial $G$-smoothings.
\end{example}

\begin{example}
Let $G=\Z/m$ where $m$ is an integer with prime factors $p_i,p_j$ satisfying the conditions in the second case of Theorem \ref{thm: Main}.
Let $M$ be an even dimensional aspherical manifold such that $H^2(M;\Q)\neq0$ and $\pi_1M$ satisfies the $K$-theoretic Farrell--Jones conjecture.
Let $V$ be a free representation (i.e. $V^G=0$ and the only isotropy groups are $G$ and $0$) such that $\dim V>2$ and let $S^V$ denote the representation sphere.
Then the second case of Theorem \ref{thm: Main} shows that there are infinitely many stably trivial $G$-smoothings of $M\times S^V$, where $G$ acts trivially on $M$.
\end{example}

\subsection{Outline}
In Section \ref{section: Background}, we review some background.
In Section \ref{section: construction}, we describe the construction giving rise to the $G$-smoothings in Theorem \ref{thm: Main}.
This construction uses the fixed set of an involution on the Whitehead group of $\pi_1 M\times G$.
In Section \ref{section: control and assembly}, we analyze $K$-groups to show that, under the hypotheses of Theorem \ref{thm: Main}, there are infinitely many elements of the Whitehead group giving rise to the constructions of Section \ref{section: construction}.
In the appendix, we elaborate on Madsen--Rothenberg's analysis of the involution on $K_{-1}(\Z[G])$.

\subsection{Acknowledgments}
The author would like to thank Shmuel Weinberger for suggesting this project and for many helpful conversations.
This paper was partially written while the author was supported by NSF Grant DMS-1839968.

\section{Background}\label{section: Background}

\subsection{Whitehead Torsion}
Recall that, for a ring $R$, $K_1(R):=\GL(R)_{ab}$ and that the Whitehead group of a group $G$ is defined to be $\op{Wh}_1(G):=K_1(\Z[G])/\langle\pm g\rangle$.
There is an involution $\tau_1$ on $K_1(R[G])$ defined by sending a matrix $M$ to the inverse of its conjugate transpose.
This induces an involution on $\op{Wh}_1(G)$ which we also denote by $\tau_1$.

\begin{remark}
The involution $\tau_1$ is the negative of the involution considered in \cite{MilnorWhitehead}.
We will let $\tau_1$ be our ``standard'' involution as it behaves better with the involution on $K_0(R[G])$ defined by dualizing a projective module (see \ref{section: negative K theory involution}).
\end{remark}

Let $M_0$ be a closed, connected $n$-dimensional CAT-manifold where CAT is the category $TOP, PL$ or $DIFF$.
A cobordism over $M_0$ consists of a tuple $(W;M_0,M_1)$ where $W$ is an $(n+1)$-manifold with $\partial W=M_0\coprod -M_1$ where $-M_1$ denotes $M_1$ with a reversed orientation.
An $h$-cobordism is a cobordism such that the inclusion of each $M_i$ is a homotopy equivalence.
Two $h$-cobordisms $(W;M_0,M_1)$ and $(W';M_0,M_2)$ over $M_0$ are isomorphic if there is a CAT isomorphism $F:W_0\rightarrow W_1$ of manifolds with boundary which restricts to the identity on $M_0$.
When $n\ge5$, there is a bijection between isomorphism classes of $h$-cobordisms over $M_0$ and the Whitehead group given by Whitehead torsion $(W;M_0,M_1)\mapsto\tau(W,M_0)$.

The following formula can be found in \cite[Section 10]{MilnorWhitehead}.
\[
\tau(W,M_0)=(-1)^{n+1}\tau_1\cdot\tau(W,M_1)
\]
We will be interested in $h$-cobordisms where $M_0\cong M_1$, which are called \emph{inertial}.
A slightly more convenient class of $h$-cobordisms are the \emph{strongly inertial} $h$-cobordisms.
These are the inertial $h$-cobordisms such that the map $M_0\rightarrow M_1$ is homotopic to a homeomorphism.
The set of strongly inertial $h$-cobordisms forms a subgroup and it is a homotopy invariant of $M$.
Neither of these properties necessarily hold for inertial $h$-cobordisms.
Strongly inertial $h$-cobordisms are a finite index subgroup of the invariant subgroup $\op{Wh}_1(\pi_1 M)^{(-1)^{n+1}\tau_1}$.
This holds for any choice of CAT \cite[Proposition 5.2]{JahrenKwasik}.
We refer to \cite{JahrenKwasik} for more details on inertial and strongly inertial $h$-cobordisms.

The Whitehead group is $\pi_1\op{Wh}(G)$ for where $\op{Wh}(G)$ is a spectrum defined as follows.
For a space $X$, let $A^{-\infty}(X)$ denote the nonconnective $A$-theory spectrum of $X$.
Then $\op{Wh}(X)$ is defined to be the cofiber of the assembly $X_+\wedge A^{-\infty}(*)\rightarrow A^{-\infty}(X)$ and $\op{Wh}(G):=\op{Wh}(BG)$.

One may alternatively define a Whitehead spectrum using algebraic $K$-theory.
Let $\op{Wh}_{K}(X)$ be the cofiber of the assembly $B\pi_1X_+\wedge K(\Z)\rightarrow K^{-\infty}(\Z[\pi_1 X])$.
The linearization map $ A^{-\infty}(X)\rightarrow K^{-\infty}(\Z[\pi_1 X])$ is a map of spectra with involution \cite[Proposition 2.11]{VogellInvolution} and it induces isomorphisms of groups with involution
\[
\pi_n\op{Wh}(X)\rightarrow\pi_n\op{Wh}_{K}(X)
\]
for $n\le1$.
We may similarly take the Whitehead spectrum of $G$ to be $\op{Wh}_K(G):=\op{Wh}_K(BG)$.
For $n\le 1$, define $\op{Wh}_n(G):=\pi_n\op{Wh}(G)$.
Since we are only concerned with these homotopy groups, we will not differentiate between $\op{Wh}(G)$ and $\op{Wh}_K(G)$.

\subsection{Equivariant Homology and the Farrell--Jones Conjecture}
We will need Davis--L\"uck's equivariant homology and the Farrell--Jones conjecture.
We review the definitions and relevant results in the literature.

If $\Gamma$ is a group, let $\op{Or}(\Gamma)$ denote its orbit category.
Regarding an orbit $\Gamma/H$ as a discrete $\Gamma$-space gives a functor $i:\op{Or}(\Gamma)\rightarrow\Gamma-\op{Top}$ to the category of $\Gamma$-spaces.
If $\mathbf{E}:\op{Or}(\Gamma)\rightarrow Sp$ is a functor to the category of spectra and if $X$ is a $\Gamma$-space, we define the equivariant homology spectrum to be the left Kan extension
\[
H^\Gamma(X;\mathbf{E}):=\op{Lan}_{i}\mathbf{E}(X).
\]
The functor $H^\Gamma(-;\mathbf{E})$ is natural in $\mathbf{E}$.
If $\mathbf{E}$ is valued in spectra with involution then so is the functor $H^\Gamma(-;\mathbf{E})$.
If $\mathbf{E}'$ is another functor valued in spectra with involution and $f:\mathbf{E}\rightarrow\mathbf{E}'$ is a natural transformation respecting the involution, then the induced map $f_*:H^{\Gamma}(X;\mathbf{E})\rightarrow H^{\Gamma}(X;\mathbf{E}')$ is a map of spectra with involution.
These claims follow from the description of the Kan extension as a coend.

One functor we consider is the functor $\mathbf{K}:\op{Or}(\Gamma)\rightarrow Sp$ which satisfies the property that $\mathbf{K}(\Gamma/H)$ is the nonconnective $K$-theory spectrum $K^{-\infty}(\Z[H])$.
This is constructed thoroughly in \cite{DavisLuckEquivariant}.

\subsubsection{Classifying Spaces}
A family $\mc{F}$ of subgroups of $\Gamma$ is a set of subgroups which is closed under conjugacy and taking subgroups.
We will primarily be considering the family $\{1\}$ consisting of just the trivial subgroup and the family $\mc{FIN}$ consisting of the finite subgroups.
The family $\mc{VCY}$ of virtually cyclic subgroups is important in the statement of the Farrell--Jones conjecture.

Given a family of subgroups $\mc{F}$, the classifying space for $\mc{F}$ is denoted $E_{\mc{F}}\Gamma$ and is characterized by
\[
(E_{\mc{F}}\Gamma)^H\simeq\begin{cases}
*&H\in\mc{F}\\
\emptyset&H\notin\mc{F}
\end{cases}.
\]
In the case $\mc{F}=\mc{FIN}$, we write $\underline{E}\Gamma:=E_{\mc{FIN}}\Gamma$.

\begin{definition}\label{def: M(1 Fin)}
Let $\mc{F},\mc{G}$ be families of subgroups of $\Gamma$.
We say $\Gamma$ satisfies $(M_{\mc{F}\subseteq\mc{G}})$ if every subgroup $H\in\mc{G}\setminus\mc{F}$ is contained in a unique subgroup $H_{max}\in\mc{G}\setminus\mc{F}$ which is maximal in $\mc{G}\setminus\mc{F}$.
\end{definition}

Let $\mc{M}$ be a complete system of representatives of conjugacy classes of maximal finite subgroups of $\Gamma$.
L\"uck--Weiermann show that, for groups $\Gamma$ satisfying $(M_{\{1\}\subseteq\mc{FIN}})$, there is the following $\Gamma$-pushout diagram.
\[
\begin{tikzpicture}[scale=2]
\node (A) at (0,1) {$\coprod_{F\in\mc{M}}\Gamma\times_{N_{\Gamma}F}EN_{\Gamma}F$};\node (B) at (2,1) {$E\Gamma$};
\node (C) at (0,0) {$\coprod_{F\in\mc{M}}\Gamma\times_{N_{\Gamma}F}EW_{\Gamma}F$};\node (D) at (2,0) {$\underline{E}\Gamma$};
\path[->] (A) edge (B) (A) edge (C) (B) edge (D) (C) edge (D);
\end{tikzpicture}
\]
Taking the $\Gamma$-equivariant homology gives the following pushout diagram of spectra.
\[
\begin{tikzpicture}[scale=2]
\node (A) at (0,1) {$\bigvee_{F\in\mc{M}}H^{N_{\Gamma}F}_*(EN_{\Gamma}F;\mathbf{K})$};\node (B) at (2,1) {$H^{\Gamma}_*(E\Gamma;\mathbf{K})$};
\node (C) at (0,0) {$\bigvee_{F\in\mc{M}}H^{N_{\Gamma}F}_*(EW_{\Gamma}F;\mathbf{K})$};\node (D) at (2,0) {$H^{\Gamma}_*(\underline{E}\Gamma;\mathbf{K})$};
\path[->] (A) edge (B) (A) edge (C) (B) edge (D) (C) edge (D);
\end{tikzpicture}
\]

The $K$-theoretic Farrell--Jones Conjecture is the following statement.
\begin{conjecture}\label{conj: FJC}
The assembly map
\[
H^\Gamma(E_{\mc{VCY}}\Gamma;\mathbf{K})\rightarrow H^\Gamma(pt;\mathbf{K})= K^{-\infty}(\Z[\Gamma])
\]
is an equivalence.
\end{conjecture}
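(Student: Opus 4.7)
The assertion is the $K$-theoretic Farrell--Jones Conjecture, open in full generality, so any realistic plan must reduce to one of the modern strategies. I would follow the controlled algebra / flow space approach of Bartels--L\"uck--Reich, which underlies essentially all known positive cases.

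The first step is formal. Using the equivariant homology framework just reviewed, one shows that the assembly map is an equivalence provided $\Gamma$ admits a \emph{finitely $\mc{VCY}$-amenable} action on a finite-dimensional compact metric space: that is, an action admitting arbitrarily fine $\Gamma$-equivariant open covers whose isotropy lies in $\mc{VCY}$ and whose multiplicity is uniformly bounded. This criterion is inherited by subgroups and by products with a group already satisfying the conjecture, so for the groups $\pi_1 M \times G$ appearing in the paper it suffices to treat each factor separately. The second, geometric step is to produce such an amenable action. For $G$ finite this is trivial. For $\pi_1 M$, the plan is to build a flow space $FS$ with a $\pi_1 M$-equivariant flow $\phi_t: FS \to FS$ whose periodic orbits correspond to conjugacy classes of infinite cyclic subgroups, and then to thicken a fundamental domain in the flow direction to produce long thin $\mc{VCY}$-covers of $FS$. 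Feeding these covers into the controlled $K$-theory machinery then splits the assembly map, giving the equivalence.

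The main obstacle is the construction of the flow space. There is no general recipe, and each known case rests on specific geometric input, such as nonpositive curvature, Gromov hyperbolicity, a lattice structure in a virtually connected Lie group, or an explicit hierarchy. In practice, when applying the conjecture (as this paper does), one does not attempt to construct the flow oneself but instead invokes closure properties of the class of Farrell--Jones groups together with the large body of known cases: finite groups, CAT$(0)$ groups, hyperbolic groups, lattices, and solvable groups all appear in the literature. The hypotheses on $\pi_1 M$ and $\pi_1 X$ in the main theorem are designed precisely to place them in one of these known classes, and closure under direct products then handles $\pi_1 M \times G$.
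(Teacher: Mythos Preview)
The statement you are trying to prove is labeled \emph{Conjecture} in the paper, and for good reason: the paper does not prove it. The $K$-theoretic Farrell--Jones Conjecture is stated there only so that it can be imposed as a \emph{hypothesis} on $\pi_1 M$ and $\pi_1 X$ in the main theorem. There is no ``paper's own proof'' to compare your proposal against.

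Your write-up is a fair summary of the Bartels--L\"uck--Reich strategy and of the closure properties that let one verify the conjecture for $\pi_1 M\times G$ once it is known for $\pi_1 M$. But as you yourself concede, the flow-space construction has no general recipe, so what you have is not a proof of the conjecture but a sketch of why the hypothesis in the main theorem is reasonable and how one checks it in practice. That is useful context, but it does not constitute a proof of the statement as written, and the paper makes no claim that it does.
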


In order to simplify the diagram above rationally, we use the following proposition, which can be found in \cite[p. 746]{LuckReichSurvey}.
\begin{prop}\label{prop: FJC rational}
Suppose $\Gamma$ satisfies the $K$-theoretic Farrell--Jones conjecture.
Then, the assembly map
\[
H^{\Gamma}_m(\underline{E}\Gamma;\mathbf{K})\rightarrow H^{\Gamma}_m(pt;\mathbf{K})\cong K_m(\Z[\Gamma])
\]
is rationally an isomorphism.
\end{prop}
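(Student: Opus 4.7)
My plan is to use the Farrell--Jones conjecture to replace $K_m(\Z[\Gamma])$ by $H^\Gamma_m(E_{\mc{VCY}}\Gamma;\mathbf{K})$ and then reduce the statement to showing that the relative assembly
\[
H^\Gamma_m(\underline{E}\Gamma;\mathbf{K}) \longrightarrow H^\Gamma_m(E_{\mc{VCY}}\Gamma;\mathbf{K})
\]
is rationally an isomorphism. Since $\mc{FIN}\subseteq\mc{VCY}$, the universal property gives a $\Gamma$-map $\underline{E}\Gamma\to E_{\mc{VCY}}\Gamma$ through which the assembly of the proposition factors; the second leg is an isomorphism by \refconj{conj: FJC}. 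So everything reduces to analyzing the relative map.

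Next I would analyze the cofiber of the relative assembly via the pushout model of L\"uck--Weiermann for the pair $\mc{FIN}\subseteq\mc{VCY}$. This writes $E_{\mc{VCY}}\Gamma$ as built from $\underline{E}\Gamma$ by attaching, for each conjugacy class of maximal infinite virtually cyclic subgroup $V\subseteq\Gamma$, an equivariant cell of the form $\Gamma\times_{N_\Gamma V}EN_\Gamma V$ along its analogue built from $\underline{E}N_\Gamma V$. Taking $\Gamma$-equivariant homology and applying the induction structure of $H^{?}(-;\mathbf{K})$ identifies the cofiber as a wedge indexed over conjugacy classes of maximal infinite virtually cyclic subgroups $V$, with the $V$-summand being the cofiber of the local relative assembly $H^V_m(\underline{E}V;\mathbf{K})\to K_m(\Z[V])$. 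It therefore suffices to show each such local cofiber is rationally trivial.

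The main obstacle, and where rationalization is essential, is this last step. For infinite virtually cyclic $V$, the Bass--Heller--Swan decomposition (and its extension by Waldhausen to amalgamated virtually cyclic groups of the second type) expresses $K_m(\Z[V])$ as the sum of a ``geometric'' term, which is exactly $H^V_m(\underline{E}V;\mathbf{K})$, and Nil-type summands $NK_m(\Z[F])$ indexed by finite subgroups $F\subseteq V$. Thus the local cofiber is assembled from Nil-groups $NK_*(\Z[F])$ for finite $F$, and by Farrell's vanishing theorem together with its refinements (Weibel, Kuku--Tang) these Nil-groups are torsion, hence vanish after tensoring with $\Q$. Consequently the local relative assembly is a rational isomorphism for every $V$, and the L\"uck--Weiermann pushout propagates this conclusion to the global relative assembly, completing the proof.
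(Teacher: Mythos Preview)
The paper does not prove this proposition; it simply cites \cite[p.~746]{LuckReichSurvey}. Your sketch is essentially the standard argument one finds there: factor the assembly through $E_{\mc{VCY}}\Gamma$, use \refconj{conj: FJC} for the second leg, and show the relative assembly $\underline{E}\Gamma\to E_{\mc{VCY}}\Gamma$ is a rational equivalence by reducing to virtually cyclic $V$ and invoking the torsion vanishing of Nil-type terms.

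Two small points of care. First, the L\"uck--Weiermann pushout you invoke for $\mc{FIN}\subseteq\mc{VCY}$ is not indexed by \emph{maximal} infinite virtually cyclic subgroups (the condition $(M_{\mc{FIN}\subseteq\mc{VCY}})$ of \refdef{def: M(1 Fin)} fails in general); one instead uses commensurability classes, or bypasses this entirely via the transitivity principle, which directly reduces the relative assembly to the case of each $V\in\mc{VCY}$. Second, the Nil contributions are not literally $NK_m(\Z[F])$: for type~I virtually cyclic $V\cong F\rtimes_\alpha\Z$ they are twisted Nil groups $NK_*(\Z[F];\alpha)$, and for type~II they are Waldhausen Nil groups. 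The torsion result still holds in all these cases (Farrell, Kuku--Tang, Grunewald, Davis--Quinn--Reich), so your conclusion stands.
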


If $W_{\Gamma}F$ is torsion free, then $EW_{\Gamma}F\simeq\underline{E}N_{\Gamma}F$ as $N_{\Gamma}F$-spaces.
Under this hypothesis, Proposition \ref{prop: FJC rational} gives the following diagram, which is rationally a pushout.
\[
\begin{tikzpicture}[scale=2]
\node (A) at (0,1) {$\bigvee_{F\in\mc{M}}H_*(BN_{\Gamma}F;K(\Z))$}; \node (B) at (2,1) {$H_*(B\Gamma;K(\Z))$};
\node (C) at (0,0) {$\bigvee_{F\in\mc{M}}K_*(\Z[N_{\Gamma}F])$};\node (D) at (2,0) {$K_*(\Z[\Gamma])$};
\path[->] (A) edge (B) (A) edge (C) (B) edge (D) (C) edge (D);
\end{tikzpicture}
\]
Taking cofibers gives us a rational equivalence
\[
\bigvee_{F\in\mc{M}}\op{Wh}(N_{\Gamma}F)\rightarrow\op{Wh}(\Gamma).
\]
To summarize, we obtain the following.

\begin{prop}\label{prop: Wh rationally injective (alg)}
Suppose $\Gamma$ satisfies $(M_{\{1\}\subseteq\mc{FIN}})$ and that, for a maximal finite subgroup $F$, $W_{\Gamma}F$ is torsion free.
Then, the map
\[
\op{Wh}_m(N_{\Gamma}F)\rightarrow\op{Wh}_m(\Gamma)
\]
is rationally injective.
\end{prop}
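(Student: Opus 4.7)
The plan is to package the three-step rationalization sketched in the preceding paragraphs into a statement about a single wedge summand. First, I would start from the L\"uck--Weiermann $\Gamma$-pushout associated to the hypothesis $(M_{\{1\}\subseteq\mc{FIN}})$ and apply the functor $H^\Gamma(-;\mathbf{K})$ to obtain a homotopy pushout square of spectra. Since $EN_\Gamma F$ and $E\Gamma$ are free, the top corners are naturally identified with $\bigvee_{F\in\mc{M}} BN_\Gamma F_+\wedge K(\Z)$ and $B\Gamma_+\wedge K(\Z)$, respectively, and the two vertical maps are the usual assembly maps.

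Next, I would invoke the torsion-freeness hypothesis on $W_\Gamma F$ to conclude that $EW_\Gamma F\simeq\underline{E} N_\Gamma F$ as $N_\Gamma F$-spaces, so that the bottom-left corner of the square becomes $\bigvee_{F\in\mc{M}} H^{N_\Gamma F}(\underline{E} N_\Gamma F;\mathbf{K})$. The rational form of the Farrell--Jones conjecture (\refprop{prop: FJC rational}) then identifies both bottom corners rationally with $K$-theory, namely $\bigvee_{F\in\mc{M}} K^{-\infty}(\Z[N_\Gamma F])$ and $K^{-\infty}(\Z[\Gamma])$. Taking cofibers of the vertical assembly maps in the resulting rational homotopy pushout square yields a rational equivalence of Whitehead spectra
\[
\bigvee_{F\in\mc{M}}\op{Wh}(N_\Gamma F)\longrightarrow\op{Wh}(\Gamma).
\]

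Finally, I would pass to homotopy groups and tensor with $\Q$, obtaining an isomorphism
\[
\bigoplus_{F\in\mc{M}}\op{Wh}_m(N_\Gamma F)\otimes\Q\xrightarrow{\ \cong\ }\op{Wh}_m(\Gamma)\otimes\Q.
\]
Since any direct summand of the domain of an isomorphism injects into the codomain, the map $\op{Wh}_m(N_\Gamma F)\otimes\Q\to\op{Wh}_m(\Gamma)\otimes\Q$ is injective for each fixed $F\in\mc{M}$, which is precisely rational injectivity of the map in the statement. The only substantive input is the rational Farrell--Jones conjecture used in the middle step; the remaining work is a formal diagram chase with the pushout square and the long exact sequences of cofiber sequences, so I do not anticipate any genuine obstacle.
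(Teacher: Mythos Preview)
Your proposal is correct and follows essentially the same route as the paper: the L\"uck--Weiermann pushout, the identification $EW_\Gamma F\simeq\underline{E}N_\Gamma F$ from torsion-freeness of $W_\Gamma F$, the rational Farrell--Jones input via \refprop{prop: FJC rational}, and the passage to cofibers yielding the rational equivalence $\bigvee_{F\in\mc{M}}\op{Wh}(N_\Gamma F)\to\op{Wh}(\Gamma)$. Your final step of reading off injectivity on a single wedge summand just makes explicit what the paper leaves to the reader.
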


In order to translate this algebraic statement into a topological statement, we need the following hypothesis (which is a specialization of \cite[Definition 4.49]{LuckTransformation} to the semifree case).

\begin{definition}\label{def: weak gap condition}
A semifree $G$-action on a manifold $X$ is said to satisfy the \emph{weak gap condition} if each component of the fixed set has codimension at least $3$.
\end{definition}

It appears to be well-known that the normalizers of finite subgroups of $\Gamma$ correspond to the fundamental groups of the lens space bundles of the fixed sets when $\pi_1 X$ is torsion free and when the action satisfies the weak gap condition.
However, we have not found a reference for this fact so we sketch a proof below.

\begin{lemma}\label{lem: normalizers}
Suppose a finite subgroup $G$ acts semifreely on a connected CW-complex $X$ and let $M$ be a component of the fixed set such that $\pi_1 M\rightarrow \pi_1 X$ is injective.
Let $\Gamma$ denote the semi-direct product $\pi_1 X\rtimes G$.
Then the subgroup $G=\{(0,g)\}\le\Gamma$ has normalizer $\pi_1 M\rtimes G\cong\pi_1 M\times G$.
If $\pi_1X$ is torsion free, then $G$ is a maximal finite subgroup of $\Gamma$.
\end{lemma}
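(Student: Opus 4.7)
The plan is to handle the lemma in three stages: an algebraic computation of $N_\Gamma(G)$, a covering-space identification of $(\pi_1 X)^G$ with $\pi_1 M$, and the easy maximality argument using torsion-freeness. For the first stage, I would write a general element of $\Gamma = \pi_1 X \rtimes G$ as $(\gamma, h)$ and compute the conjugate of $(0, g)$ directly from the semidirect-product multiplication: it equals $(\gamma \cdot \sigma_{hgh^{-1}}(\gamma^{-1}),\, hgh^{-1})$, where $\sigma \colon G \to \Aut(\pi_1 X)$ is the action induced by the basepoint $m_0 \in M$. Since $hgh^{-1}$ ranges over $G$ as $g$ does, this lies in $G$ for every $g \in G$ precisely when $\sigma_g(\gamma) = \gamma$ for all $g$, so $N_\Gamma(G) = (\pi_1 X)^G \rtimes G$, which is in fact a direct product because $G$ acts trivially on its own fixed subgroup.

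The crux is identifying $(\pi_1 X)^G$ with $\pi_1 M$ as subgroups of $\pi_1 X$. I would pass to the universal cover $p \colon \widetilde X \to X$, choose a lift $\tilde m_0$ of $m_0$, and use unique path-lifting to lift the $G$-action to $\widetilde X$ so that $\tilde m_0$ is fixed. The component $\widetilde M_0 \subseteq p^{-1}(M)$ containing $\tilde m_0$ is the universal cover of $M$ by the hypothesis $\pi_1 M \hookrightarrow \pi_1 X$, and the restricted $G$-action on $\widetilde M_0$ covers the trivial action on $M$ while fixing $\tilde m_0$; by uniqueness of deck transformations it must be trivial, so $\widetilde M_0 \subseteq \widetilde X^G$, yielding $\pi_1 M \subseteq (\pi_1 X)^G$. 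For the converse, any $\gamma \in (\pi_1 X)^G$ commutes with the lifted $G$-action on $\widetilde X$, so it carries $\widetilde M_0$ to another component $\gamma\widetilde M_0$ of $p^{-1}(M)$ that is also contained in $\widetilde X^G$; the containment $\gamma \in \pi_1 M$ is equivalent to $\gamma\widetilde M_0 = \widetilde M_0$. The hardest step is ruling out further $G$-fixed components of $p^{-1}(M)$: I would use the homotopies witnessing $g\ell \simeq \ell$ rel basepoint (for a loop $\ell$ representing $\gamma$) to assemble a $G$-equivariant map $S^1 \times EG \to X$ with trivial action on $S^1$, lift this to $\widetilde X$, and then use simple-connectedness together with equivariant obstruction theory to straighten it into a $G$-equivariant path from $\tilde m_0$ to $\gamma\tilde m_0$. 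Such a path lies in $\widetilde X^G$ and, by connectedness, in $\widetilde M_0$, forcing $\gamma\tilde m_0 \in \widetilde M_0$ and hence $\gamma \in \pi_1 M$.

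Finally, if $G \leq G' \leq \Gamma$ with $G'$ finite, then $G' \cap \pi_1 X$ is a finite subgroup of the torsion-free group $\pi_1 X$, hence trivial. The projection $\Gamma \to G$ thus injects $G'$ into $G$, and combined with $G \leq G'$ this forces $G' = G$, giving maximality. The only non-formal step is the equivariant-obstruction argument in the middle paragraph, and this is where I expect the bulk of the real work to lie; the algebraic computation and the maximality clause are essentially formal once the fixed-subgroup identification is in hand.
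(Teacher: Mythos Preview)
Your decomposition into (i) the algebraic computation $N_\Gamma(G) = (\pi_1 X)^G \rtimes G$, (ii) the identification $(\pi_1 X)^G = \pi_1 M$ via covering spaces, and (iii) maximality, follows the same route as the paper: the paper also passes to $\tilde X$, identifies $G$ as the pointwise stabilizer of $\tilde M_0$, and then asserts that the normalizer is generated by $G$ together with the deck transformations preserving $\tilde M_0$ setwise, namely $\pi_1 M$. The paper's sketch does not isolate step (ii) as the crux, but that is exactly where the content lies, and you have located it correctly.

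Unfortunately the inclusion $(\pi_1 X)^G \subseteq \pi_1 M$ fails under the stated hypotheses, so your equivariant-obstruction argument cannot be completed as written. Here is a counterexample: let $G=\Z/p$ act linearly on $S^{2k}$ via a free representation on $\R^{2k}$, so that the only fixed points are the two poles $N,S$; let $r$ be the $G$-equivariant reflection of $S^{2k}$ exchanging $N$ and $S$, and let $X$ be the mapping torus of $r$. Then $X$ is a closed smooth semifree $G$-manifold, $M=X^G$ is a single circle, and $\pi_1 M\to\pi_1 X=\Z$ is multiplication by $2$, hence injective. Since $S^{2k}$ is simply connected, $G$ acts trivially on $\pi_1 X$, so $(\pi_1 X)^G=\Z$ strictly contains $\pi_1 M=2\Z$ and $N_\Gamma(G)=\Z\times G\neq 2\Z\times G$. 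In the universal cover $\tilde X=S^{2k}\times\R$ one has $\tilde X^G=\{N,S\}\times\R$, and the generator $\gamma\in(\pi_1 X)^G$ carries $\tilde m_0=(N,0)$ to $(S,1)$, which lies in the \emph{other} component; no $G$-equivariant path joins them, and the obstruction you hoped to kill with simple-connectedness of $\tilde X$ is genuinely detected by $\pi_{2k}\tilde X\neq 0$. The paper's proof contains the same gap: it never explains why a deck transformation in $(\pi_1 X)^G$ must send $\tilde M_0$ to itself rather than to another $G$-fixed component of $p^{-1}(M)$.
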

\begin{proof}
Let $x_0\in M\subseteq X$ be a basepoint and let $\tilde{x}_0$ be a lift to the universal cover $\tilde{X}$.
Let $\tilde{M}\subseteq \tilde{X}$ denote the component of the preimage of $M$ containing the point $\tilde{x}_0$.
The subgroup $G=\{(0,g)\}\le\Gamma$ is precisely the stabilizer of $\tilde{M}$ under the action of $\Gamma$ on $\tilde{X}$ and the normalizer of $G$ is generated by $G$ and the subgroup of $\pi_1 X$ which sends $\tilde{M}$ to itself.
This is subgroup is $\pi_1 M$ which proves the first part of the proposition.

The second part is straightforward.
\end{proof}

\begin{lemma}\label{lem: pi_1 lens space bundle}
Suppose $E$ is the total space of a lens space bundle over a connected CW-complex $M$ obtained as the quotient of a sphere bundle $\tilde{E}$ by a free $G$-action.
Then,
\[
\pi_1 E=\pi_1 M\times G.
\]
\end{lemma}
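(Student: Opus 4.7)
The plan is to analyze the two fibrations at play: the sphere bundle $S^n\to\tilde{E}\to M$ and the regular covering $\tilde{E}\to E$ with deck group $G$. First I would observe that, since $E=\tilde{E}/G$ with $G$ acting freely, the projection $\tilde{E}\to E$ is a regular covering with deck group $G$, yielding a short exact sequence
\[
1\to\pi_1\tilde{E}\to\pi_1 E\to G\to 1.
\]
Because the fibers of a lens space bundle are of the form $S^n/G$ with fundamental group $G$, the sphere $S^n$ must be simply connected (so $n\ge 2$); then the homotopy long exact sequence of $S^n\to\tilde{E}\to M$ collapses to give $\pi_1\tilde{E}\cong\pi_1 M$, converting the sequence above into $1\to\pi_1 M\to\pi_1 E\to G\to 1$.

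Next I would produce a splitting using the fiber inclusion. Pick a basepoint $m_0\in M$ and let $L_0\subset E$ denote the lens space fiber over $m_0$, whose universal cover is the fiber $S^n\subset\tilde{E}$. The inclusion $S^n\hookrightarrow\tilde{E}$ is $G$-equivariant, so it descends to an inclusion $L_0\hookrightarrow E$ under which the covering $\tilde{E}\to E$ restricts to the fiber cover $S^n\to L_0$. The deck transformation map $\pi_1 L_0\to G$ of this restricted cover is the identity on $G$, and since it factors as $\pi_1 L_0\to\pi_1 E\to G$, the first arrow is a section of the short exact sequence. Therefore $\pi_1 E\cong\pi_1 M\rtimes G$.

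To upgrade this to a direct product, I would check that the conjugation action of $G$ on $\pi_1\tilde{E}\cong\pi_1 M$ is trivial. Conjugation by the image of a section corresponds to the action of deck transformations of $\tilde{E}\to E$, which is just the original free $G$-action on $\tilde{E}$. By hypothesis this action preserves each fiber of the sphere bundle $\tilde{E}\to M$, hence covers the identity on $M$, and so the induced map on $\pi_1\tilde{E}\cong\pi_1 M$ is the identity. Therefore the extension is trivial and $\pi_1 E\cong\pi_1 M\times G$.

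There is no real obstacle here beyond bookkeeping basepoints and lifts, and verifying the implicit dimension hypothesis $n\ge 2$ needed for the fibers $S^n/G$ to be genuine lens spaces with fundamental group $G$.
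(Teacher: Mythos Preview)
Your argument is correct and uses the same ingredients as the paper, but organizes them differently. The paper's proof is more direct: it simultaneously uses the bundle projection $\alpha:\pi_1 E\twoheadrightarrow\pi_1 M$ and the covering quotient $\beta:\pi_1 E\twoheadrightarrow G$, observes that the fiber inclusion $G\cong\pi_1 L_0\to\pi_1 E$ followed by $\beta$ is surjective (hence an isomorphism, $G$ being finite), and then checks that $(\alpha,\beta):\pi_1 E\to\pi_1 M\times G$ is an isomorphism. Your route instead builds the split extension $1\to\pi_1 M\to\pi_1 E\to G\to 1$ and then argues separately that the conjugation action is trivial. The paper's organization is a bit more economical because the map $\alpha$ does the work of trivializing the action for free: since your section $s:G\to\pi_1 E$ lands in the fiber, $\alpha\circ s$ is trivial, and applying the homomorphism $\alpha$ to $s(g)\,n\,s(g)^{-1}$ immediately gives $\alpha(n)$, forcing $s(g)\,n\,s(g)^{-1}=n$ by injectivity of $\alpha$ on $\pi_1\tilde E$. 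This is the clean way to justify the step where you pass from ``$G$ covers the identity on $M$'' to ``the action on $\pi_1\tilde E$ is trivial,'' which otherwise requires a little care with basepoints.
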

\begin{proof}
There is a diagram
\[
\begin{tikzpicture}[scale=1.5]
\node (A) at (2,2) {$\pi_1\tilde{E}$};\node (B) at (0,1) {$G$};\node (C) at (2,1) {$\pi_1E$};\node (D) at (4,1) {$\pi_1M$}; \node (F) at (2,0) {$G$};
\path[->] (B) edge (C) (A) edge node[above right]{$\cong$} (D);
\path[->>] (C) edge node[above]{$\alpha$} (D) (C) edge node[left]{$\beta$} (F);
\path[right hook->] (A) edge (C);
\end{tikzpicture}
\]
from which one sees that the composite $G\rightarrow G$ is surjective, and hence an isomorphism.
Then the function $(\alpha,\beta):\pi_1 E\rightarrow \pi_1 M\times G$ is an isomorphism.
\end{proof}

Suppose $G$ acts smoothly and semifreely on a manifold $X$ such that $\pi_1X$ is torsion free and such that the action satisfies the weak gap condition.
Let $M$ be a $\pi_1$-injective component of the fixed set and let $\nu$ denote the normal bundle.
Let $X'$ denote the $G$-manifold obtained from $X$ by removing an equivariant neighborhood of the fixed set.
Then $\pi_1X'/G=\Gamma$ and one can check that the inclusion of the lens space bundle
\[
i:S\nu/G\rightarrow X'/G
\]
induces the inclusion of the normalizer
\[
N_{\Gamma}G\rightarrow \Gamma.
\]

Applying Proposition \ref{prop: Wh rationally injective (alg)}, we obtain the following.
\begin{prop}\label{prop: Wh rationally injective (top)}
With the notation and assumptions above,
\[
i_*:\op{Wh}_m(S\nu/G)\rightarrow\op{Wh}_m(X'/G)
\]
is rationally injective.
\end{prop}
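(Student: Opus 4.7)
The plan is to apply Proposition \ref{prop: Wh rationally injective (alg)} with $\Gamma = \pi_1(X'/G) = \pi_1 X \rtimes G$ and $F = G = \{(0,g)\} \le \Gamma$. Because the Whitehead spectrum depends only on the fundamental group, and the discussion preceding the statement identifies $i_*$ on $\pi_1$ with the inclusion $N_\Gamma G \hookrightarrow \Gamma$, the topological map in question coincides with the algebraic map $\op{Wh}_m(N_\Gamma G) \to \op{Wh}_m(\Gamma)$ considered in that proposition.

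It remains to verify the two hypotheses on $\Gamma$ and $G$. For the torsion-freeness of $W_\Gamma G$: by Lemma \ref{lem: normalizers}, $N_\Gamma G \cong \pi_1 M \times G$, so $W_\Gamma G \cong \pi_1 M$; this embeds in $\pi_1 X$ by $\pi_1$-injectivity, hence is torsion free by the standing hypothesis on $\pi_1 X$.

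Verifying $(M_{\{1\}\subseteq\mc{FIN}})$ for $\Gamma$ is the heart of the argument. Since $\pi_1 X$ is torsion free, any finite subgroup $H \le \Gamma$ meets $\pi_1 X$ trivially, so the projection $\Gamma \to G$ restricts to an injection on $H$. I would then lift the $G$-action on $X$ to a $\Gamma$-action on the universal cover $\tilde X$ in which $\pi_1 X$ acts freely by deck transformations. Given non-trivial $H$, one picks a fixed point of the induced $H$-action in $X^G$ and lifts it to $\tilde X$; the stabilizer in $\Gamma$ of this lift is a maximal finite subgroup (a conjugate of $G$) and it contains a conjugate of $H$. Uniqueness of the containing maximal subgroup follows because the components of $\tilde X^G$ are permuted freely by $\Gamma/G$, so the non-empty fixed set $\tilde X^H \subseteq \tilde X^G$ lies in a single component and thus picks out a single maximal stabilizer.

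With both hypotheses in hand, Proposition \ref{prop: Wh rationally injective (alg)} yields the desired rational injectivity of $i_*$. The main obstacle is the verification of $(M_{\{1\}\subseteq\mc{FIN}})$, which rests on a careful analysis of how finite subgroups of the semidirect product sit inside stabilizers of the lifted $\Gamma$-action on $\tilde X$; the remaining steps are largely formal consequences of the earlier lemmas and definitions.
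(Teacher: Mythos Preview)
Your reduction to Proposition~\ref{prop: Wh rationally injective (alg)} via the identification of $i_*$ on fundamental groups with the inclusion $N_\Gamma G\hookrightarrow\Gamma$ is exactly the paper's approach; the paper offers no argument beyond the sentence ``Applying Proposition~\ref{prop: Wh rationally injective (alg)}, we obtain the following.'' Your check that $W_\Gamma G\cong\pi_1 M$ is torsion free is also fine.

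The genuine gap is in your verification of $(M_{\{1\}\subseteq\mc{FIN}})$. For existence: lifting a point of $X^G$ to $\tilde X$ yields a point whose $\Gamma$-stabiliser is \emph{some} conjugate of $G$, but you only conclude it ``contains a conjugate of $H$,'' which is not what the condition requires. There is no reason the particular subgroup $H$ lies in that stabiliser, and indeed a nontrivial finite $H\le\Gamma$ need not fix any point of $\tilde X$: such $H$ correspond to crossed homomorphisms $H'\to\pi_1 X$ (with $H'\le G$ the projection), and those not cohomologous to one arising from a lift of a fixed point give $\tilde X^H=\emptyset$. For uniqueness: $\Gamma/G$ is not a group since $G$ is not normal in the semidirect product; the components of the singular set are not permuted freely (the proof of Lemma~\ref{lem: normalizers} shows the component $\tilde M$ has stabiliser $N_\Gamma G$, not the trivial group); and the inclusion $\tilde X^H\subseteq\tilde X^G$ fails whenever $H\not\le G$. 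Even reading $\tilde X^G$ as the full singular set $p^{-1}(X^G)$, nothing prevents $\tilde X^H$ from meeting several of its components, and that is exactly the scenario in which $(M_{\{1\}\subseteq\mc{FIN}})$ fails. So neither half of the argument goes through as written. For what it is worth, the paper does not spell out this verification either, folding it into ``the notation and assumptions above.''
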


\subsection{Controlled $h$-Cobordisms}
We will be interested in $h$-cobordisms of lens space bundles over a manifold $M$.
In order to study such $h$-cobordisms, it is helpful to use the notion of control introduced by Quinn \cite{QuinnEnds2}.
In our applications, our objects will be controlled over a compact manifold so our exposition here is slightly simpler than what is discussed in \cite{QuinnEnds2}.

\begin{definition}\label{def: controlled homotopy}
Let $(M,d)$ be a compact metric space and let $\ep>0$.
Suppose $p:E\rightarrow M$ and $p':E'\rightarrow M$ are proper maps.
\begin{enumerate}
\item A function $f:E\rightarrow E'$ is \emph{$\ep$-controlled} if, for all $x\in E$, $d(p(x),p'\circ f(x))<\ep$.
\item A homotopy $H:E\times I\rightarrow E'$ is \emph{$\ep$-controlled} if, for all $x\in E$, the set $p'\circ H(x,I)$ has diameter less than $\ep$.
\end{enumerate}
\end{definition}

\begin{remark}
If $p:E\rightarrow M$ and $p':E'\rightarrow M$ are fiber bundles over $M$, then any map of bundles is controlled for all $\ep>0$.
If $E$ and $E'$ are isomorphic CAT block bundles over $M$, then for each $\ep>0$, there is an $\ep$ controlled CAT isomorphism $E\to E'$.
\end{remark}

\begin{definition}\label{def: controlled h-cobordism}
Let $(W;E,E')$ be an $h$-cobordism and let $p:W\rightarrow M$ be a proper map.
We say that $(W;E,E')$ is a \emph{controlled $h$-cobordism with respect to $p$} if, for all $\ep>0$, there is a deformation retraction of $W$ to $E$ which is $\ep$-controlled. 

Two controlled $h$-cobordisms $\phii_i:(W_i;E_i,E_i')\rightarrow M$, $i=0,1$, are \emph{controlled isomorphic} if, for all $\ep>0$, there is an isomorphism of $h$-cobordisms $F:W_0\rightarrow W_1$ which is $\ep$-controlled over $M$.
\end{definition}

If $(W_0;E_0,E_0')$ is a controlled $h$-cobordism, there is a controlled $h$-cobordism $(W_1;E_0',E_1)$ such that $(W_0\cup_{E_0'}W_1;E_0,E_1)$ is controlled isomorphic to a product (see \cite[Theorem 1.2]{QuinnEnds2} and \cite[Proposition 1.7]{QuinnEnds2}).

\begin{prop}\label{prop: swindle}
Suppose $\xi\rightarrow M$ is a $G$-vector bundle whose fibers are free $G$-representations.
Let $S\xi$ denote the sphere bundle of $\xi$ and let $p:E\rightarrow M$ denote the lens space bundle obtained by quotienting.
Let $(W;E,E)$ be a controlled $h$-cobordism with respect to $p$ and let $\tilde{W}$ denote the $G$-cover.
Then there is a $G$-homeomorphism $\Phi:\tilde{W}\cup_{S\xi}D\xi\rightarrow D\xi$ where $D\xi$ denotes the disk bundle.
If $f:S\xi\rightarrow S\xi$ is a $G$-homeomorphism, then we may assume the homeomorphism $\Phi$ restricts to $f$ on the boundary.
\end{prop}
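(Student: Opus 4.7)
The plan is to use an equivariant Mazur--style infinite swindle to absorb $\tilde W$ into a shrinking collar of the zero section of $D\xi$. Starting from $\tilde W_0:=\tilde W$, I would iterate Quinn's cancellation theorem (cited just before the proposition) to produce a sequence of controlled $G$-equivariant $h$-cobordisms $\tilde W_1,\tilde W_2,\ldots$ between copies of $S\xi$ such that each composition $\tilde W_i\cup_{S\xi}\tilde W_{i+1}$ is controlled $G$-isomorphic to the product $S\xi\times I$. Because the cancellation result holds for any prescribed control parameter and passes to $G$-covers by naturality, I can choose these inverses to be $\ep_i$-controlled with $\ep_i\to0$.

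Next I would identify $D\xi\setminus\{\text{zero section}\}$ with $S\xi\times(0,1]$ via radial coordinates and stack the chain $\tilde W_0\cup\tilde W_1\cup\tilde W_2\cup\cdots$ so that $\tilde W_i$ occupies the radial strip $[2^{-i-1},2^{-i}]$. Call the resulting space $Z$; adjoining the zero section, the $\ep_i\to0$ control forces the topology to extend continuously over $M$, making $Z$ a compact $G$-space whose fixed set is $M$.

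The key observation is that $Z$ now admits two natural bracketings,
\[
\tilde W_0\cup\bigl(\tilde W_1\cup\tilde W_2\bigr)\cup\bigl(\tilde W_3\cup\tilde W_4\bigr)\cup\cdots
\quad\text{and}\quad
\bigl(\tilde W_0\cup\tilde W_1\bigr)\cup\bigl(\tilde W_2\cup\tilde W_3\bigr)\cup\cdots.
\]
Each parenthesized pair is controlled $G$-isomorphic to $S\xi\times I$, so the first bracketing produces a $G$-homeomorphism $Z\cong\tilde W\cup_{S\xi}D\xi$ and the second produces $Z\cong D\xi$. Composing one with the inverse of the other yields the required $\Phi$.

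The main obstacle is continuity at the zero section: each bracketing defines $\Phi$ piecewise on radial strips, and I must show the resulting map extends across $M$. This is forced by the control hypothesis, since the $\ep_i$-controlled isomorphisms from Quinn's theorem have tracks of diameter tending to zero as the strips accumulate on $M$. For the boundary condition I would simply precompose the outermost identification $\tilde W_0\cup\tilde W_1\cong S\xi\times I$ with the given $G$-homeomorphism $f$; this change propagates through the remainder of the swindle without affecting the construction, and ensures $\Phi|_{\partial(\tilde W\cup D\xi)}=f$.
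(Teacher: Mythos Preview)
Your proposal is correct and is essentially the same Mazur--swindle argument the paper gives: both build an infinite telescope of controlled $h$-cobordisms using Quinn's cancellation, then exploit the two bracketings to produce the desired homeomorphism, with the summable control parameters ensuring continuity at the zero section. The only cosmetic difference is that the paper carries out the entire construction downstairs on the lens space bundle $E=S\xi/G$ (where Quinn's theorem applies directly) and passes to the $G$-cover only at the very end, whereas you phrase everything upstairs on $S\xi$ and invoke ``naturality'' to lift the cancellation; the paper's ordering is slightly cleaner, but your version is equivalent once you observe that a controlled isomorphism on $E$ lifts uniquely to a $G$-equivariant controlled isomorphism on $S\xi$.
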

\begin{proof}
Let $\ep_n$ be a sequence such that $\sum\ep_n<\infty$.
Write $(W_0;E_0,E_1):=(W;E,E)$ and let $(W_1;E_1,E_2)$ denote a controlled $h$-cobordism such that $(W_0\cup W_1;E_0,E_2)$ is controlled isomorphic to $(E\times I;E,E)$.
Let $F_1:W_0\cup W_1\rightarrow E\times I$ be an $\ep_1$-controlled isomorphism and let $f_1$ denote the restriction of $F_1$ on $E_2$.
Inductively, define
\begin{itemize}
\item $(W_n;E_n,E_{n+1})$ to be a controlled $h$-cobordism such that $(W_{n-1}\cup_{f_{n-1}}W_n;E_{n-1},E_{n+1})$ is controlled isomorphic to $(E\times I;E,E)$,
\item $F_n:(W_{n-1}\cup_{f_{n-1}}W_n;E_{n-1},E_{n+1})\rightarrow (E\times I;E,E)$ to be a an $\ep_n$-controlled isomorphism and
\item $f_n$ to be the restriction of $F_n$ on $E_{n+1}$.
\end{itemize}
All $E_n$ are of course diffeomorphic to $E$.

Define
\[
Y:=W_0\cup W_1\cup_{f_1} W_2\cup_{f_2}W_3\cup\cdots.
\]
Clearly, $Y$ is homotopy equivalent to $E$ so we may take a $G$-cover $\tilde{Y}$.
Define $p_Y:Y\rightarrow M$ as follows.
For $x\in W_n\setminus E_{n+1}$, let $p_Y(x)$ be the image of $x$ under $p:W_n\to M$ where the first map comes from an $\ep_n$-deformation retraction.
Note that $p_Y$ is not, in general, continuous.

Topologize $\tilde{Y}\cup M$ by declaring that a sequence of points $x_n\in W_{k_n}$ converges to $m\in M$ if $p_Y(x_n)$ converges to $m$ and if $k_n\rightarrow\infty$.
Let $F:Y\rightarrow E\times[0,\infty)$ be defined to be $F_{2n+1}$ on $W_{2n}\cup_{f_{2n}}W_{2n+1}$ and let $G:Y\rightarrow W\cup_{E} E\times[0,\infty)$ be defined to be the identity $W_0\rightarrow W$ and $F_{2n}$ on $W_{2n-1}\cup_{f_{2n-1}}W_{2n}$.
Then $\tilde{F}$ and $\tilde{G}$ are equivariant homeomorphisms
\[
\tilde{W}\cup_{S\xi}S\xi\times[0,\infty)\xleftarrow{\tilde{G}}\tilde{Y}\xrightarrow{\tilde{F}}S\xi\times[0,\infty)
\]
which extends to equivariant homeomorphisms
\[
\tilde{W}\cup_{S\xi}D\xi\leftarrow \tilde{Y}\cup M\rightarrow D\xi.
\]
Taking $\Phi:\tilde{W}\cup_{S\xi}D\xi\rightarrow D\xi$ finishes the proof.
\end{proof}

\begin{figure}
\includegraphics[scale=0.65]{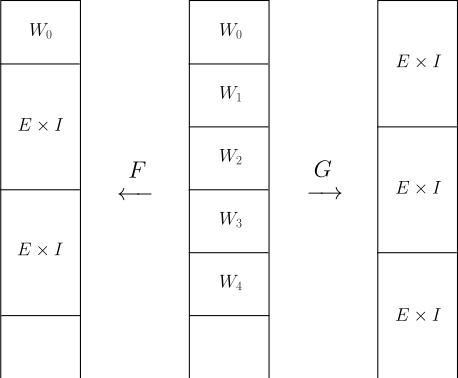}
\label{fig: F and G}
\caption{$F$ and $G$ in the proof of Proposition \ref{prop: swindle}}
\end{figure}

In Section \ref{section: control and assembly}, we discuss the relationship between the assembly map and controlled $h$-cobordisms.

\section{The Construction of Smoothings}\label{section: construction}

Suppose $X$ is a smooth, semifree $G$-manifold and let $M$ be a component of $X^G$.
Let $\nu$ denote the normal bundle of $M$ and let $\mathring{D}\nu$ denote the interior of the disk bundle $D\nu$.
Then $S\nu$ has a free $G$-action and $E:=S\nu/G$ is a lens space bundle over $M$.
Define $X':=X\setminus \mathring{D}\nu$.

Let $(W;E,E)$ be a smooth inertial $h$-cobordism controlled over $M$ and let $\tilde{W}$ be the $G$-cover.
Define
\[
X_W:=X'\cup\tilde{W}\cup D\nu.
\]
By Proposition \ref{prop: swindle}, there is an equivariant homeomorphism $f_W:X_W\rightarrow X$.
The equivariant smooth structures we study will be of the form $(X_W,f_W)$.

We record the following.

\begin{prop}\label{prop: stabilization}
The $G$-smoothing $f_W\times\op{id}:X_W\times\R\rightarrow X\times\R$ is isotopic to the identity.
\end{prop}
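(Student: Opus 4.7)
I would construct a $G$-diffeomorphism $\psi : X_W \times \R \to X \times \R$ together with an isotopy through $G$-homeomorphisms from $f_W \times \op{id}_\R$ to $\psi$; packaging these as a level-preserving $G$-homeomorphism on $(X_W \times \R) \times I$ yields the required isotopy. Since $f_W$ is constructed from $\Phi$ of Proposition~\ref{prop: swindle} and is the identity outside $\tilde{W} \cup D\nu$, the problem localizes to producing, inside $D\nu \times \R$, a $G$-diffeomorphism $\psi : (\tilde{W} \cup_{S\nu} D\nu) \times \R \to D\nu \times \R$ that fixes the outer boundary $S\nu \times \R$ and that is isotopic through $G$-homeomorphisms (relative to $S\nu \times \R$) to $\Phi \times \op{id}_\R$.

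The key input is a smooth version of the Mazur swindle. By the $s$-cobordism theorem, applicable in the dimension range under consideration, there is a smooth equivariant $h$-cobordism $\tilde{W}^{-1}$ with $\tilde{W} \cup_{S\nu} \tilde{W}^{-1}$ smoothly $G$-diffeomorphic to $S\nu \times I$ relative to the boundary. Forming the infinite concatenation $\tilde{W} \cup \tilde{W}^{-1} \cup \tilde{W} \cup \tilde{W}^{-1} \cup \cdots$ and reassociating the pairs in the two natural ways yields a smooth $G$-diffeomorphism
\[
\tilde{W} \cup_{S\nu} (S\nu \times [0,\infty)) \cong S\nu \times [0,\infty)
\]
relative to the compact boundary. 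The extra $\R$-factor in $D\nu \times \R$ provides geometric room to embed a half-infinite collar $S\nu \times [0,\infty)$ sitting inside the boundary $S\nu \times \R$; applying the smooth swindle absorbs $\tilde{W}$ smoothly into this collar to produce $\psi$. The isotopy from $\Phi \times \op{id}_\R$ to $\psi$ is obtained by continuously varying how far along the boundary the absorbing collar is pushed, with the degenerate end recovering the topological swindle of Proposition~\ref{prop: swindle} and the fully extended end giving the smooth diffeomorphism $\psi$.

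The main obstacle is justifying that the infinite concatenation of smooth $h$-cobordisms assembles into a single smooth $G$-manifold, with both groupings providing compatible smooth structures rather than only topological ones. The resolution leverages that $\tilde{W}$ is controlled over $M$: summability of the controls $\epsilon_n$ (as in the proof of Proposition~\ref{prop: swindle}) lets us place the $n$-th piece in a shrinking smooth collar so that the smooth structures glue consistently across infinitely many seams and extend smoothly to the limit.
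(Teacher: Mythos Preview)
Your swindle establishes the wrong statement. The infinite concatenation argument correctly yields a smooth $G$-diffeomorphism
\[
\tilde W \cup_{S\nu} \bigl(S\nu \times [0,\infty)\bigr) \;\cong\; S\nu \times [0,\infty)
\]
relative to the compact boundary. But the target is $(\tilde W \cup D\nu)\times\R \cong D\nu\times\R$, or equivalently $\tilde W\times\R \cong S\nu\times I\times\R$. The boundary collar of $D\nu\times\R$ is $S\nu\times[0,1)\times\R$, a half-\emph{plane}'s worth of $S\nu$'s, not a half-ray. Your embedded $S\nu\times[0,\infty)$ inside $S\nu\times\R$ is codimension one in that collar; applying the one-parameter swindle there only absorbs a single slice $\tilde W\times\{0\}$, not the entire family $\tilde W\times\R$. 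Promoting this to a codimension-zero statement amounts to proving $\tilde W\times\R\cup_{S\nu\times\R}(S\nu\times\R\times[0,\infty))\cong S\nu\times\R\times[0,\infty)$, which is just the swindle applied to $\tilde W\times\R$ and gives no new information. I do not see how ``continuously varying how far along the boundary the absorbing collar is pushed'' repairs this, and the final paragraph about summable controls addresses smoothness of the infinite gluing, not this dimensional mismatch.

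The paper's argument is entirely different and much shorter. Since $\chi(S^1)=0$, the product formula for (controlled) Whitehead torsion gives that $W\times S^1$ is a trivial controlled $h$-cobordism over $M$; Quinn's controlled $h$-cobordism theorem then supplies a controlled isomorphism $W\times S^1\cong E_0\times I\times S^1$. Passing to the $\Z$-cover yields $W\times\R\cong E_0\times I\times\R$, and plugging this into the construction of $(X_W,f_W)$ gives the result. No infinite concatenation is needed once you cross with $S^1$.
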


\begin{proof}
Let $(W;E_0,E_1)$ be a controlled $h$-cobordism.
Since the Euler characteristic of $S^1$ vanishes, there is an isomorphism
\[
F:W\times S^1\xrightarrow{\cong} E_0\times I\times S^1
\]
of $h$-cobordisms controlled over $M$ (see \cite[Proposition 1.7]{QuinnEnds2}).
Taking the $\Z$-cover shows that $W\times\R\cong E_0\times I\times\R$.
The proposition follows from the construction of $(X_W,f_W)$.
\end{proof}

Our goal in the remainder of this section is to show that, under certain hypotheses, different choices of $h$-cobordisms yield different $G$-smoothings.

\subsection{An Alternate Interpretation of the Whitehead Group}
Let $A$ be a finite complex.
The Whitehead group $\op{Wh}_1(A)$ of $A$ may be defined as follows.
An element is represented by a pair $(X,A)$ where the inclusion $A\hookrightarrow X$ is a homotopy equivalence.
Two pairs $(X,A)$ and $(Y,A)$ are equivalent if $Y$ can be obtained from $X$ by a series of elementary expansions and collapses.
The sum $(X,A)+(Y,A)$ is given by $(X\cup_A Y,A)$ and the identity is $(A,A)$.
A continuous function $f:A\rightarrow B$ induces a map on Whitehead groups as follows.
\[
f_*(X,A)=(X\cup_A\op{Cyl}(f),B)
\]
When $A$ is connected, this is isomorphic to $\op{Wh}_1(\pi_1 A)$.

If $f:B\rightarrow A$ is a homotopy equivalence, then the pair $(\op{Cyl}(f),A)$ is the torsion of $f$.
If $A_0$ is a compact manifold (possibly with boundary), an $h$-cobordism $(W;A_0,A_1)$ determines an element in the Whitehead group $\op{Wh}_1(A_0)$ this way via the homotopy equivalence $A_1\rightarrow A_0$.
Using this interpretation of the Whitehead group, the following can be verified.

\begin{lemma}\label{lem: h-cobordism sum}
Let $A_0$ and $B_0$ be compact manifolds with boundary and let $(W;A_0,A_1)$ and $(V;B_0,B_1)$ be $h$-cobordisms of manifolds with boundary.
Let $\partial_0 A$ be a component of $\partial A_0$ which is homeomorphic to a component of $\partial B_0$.
Let $i_{A_0}:A_0\inj A_0\cup_{\partial_0 A}B_0$ and $i_{B_0}:B_0\inj A_0\cup_{\partial_0 A}B_0$ be the inclusions.
Then
\[
(W\cup_{\partial_0 A\times I}V;A_0\cup_{\partial_0 A}B_0,A_1\cup_{\partial_0 A}B_1)
\]
is an $h$-cobordism and
\[
\tau(W\cup_{\partial_0 A\times I}V)=(i_{A_0})_*\tau(W)+(i_{B_0})_*\tau(V)\in\op{Wh}_1(A_0\cup_{\partial_0 A}B_0).
\]
\end{lemma}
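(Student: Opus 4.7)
The plan is to use the alternate interpretation of the Whitehead group in terms of pairs $(X,A)$ recalled just before the lemma, and to reduce the statement to the classical sum theorem for Whitehead torsion.

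First I would verify that
\[
\bigl(W\cup_{\partial_0 A\times I}V;\ A_0\cup_{\partial_0 A}B_0,\ A_1\cup_{\partial_0 A}B_1\bigr)
\]
is genuinely an $h$-cobordism. The product $\partial_0 A\times I$ appears as a common boundary face of $W$ and $V$ (after standard smoothing of corners if we work in $\op{DIFF}$), so the gluing produces a cobordism of manifolds-with-boundary whose two ends are $A_0\cup_{\partial_0 A}B_0$ and $A_1\cup_{\partial_0 A}B_1$. The inclusions of these ends are homotopy equivalences by a Mayer--Vietoris / van Kampen argument, since $A_i\hookrightarrow W$, $B_i\hookrightarrow V$, and $\partial_0 A\hookrightarrow \partial_0 A\times I$ are all homotopy equivalences.

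Next I would apply the sum theorem for Whitehead torsion (see \cite{MilnorWhitehead}). Choose compatible CW structures---via, for example, a smooth triangulation or a relative handle decomposition---so that $A_0$, $B_0$, and $\partial_0 A$ are subcomplexes of $W$, $V$, and $\partial_0 A\times I$ respectively. With respect to the decomposition $W\cup V=W\cup(\partial_0 A\times I)\cup V$, the sum formula applied to the inclusion $f:A_0\cup_{\partial_0 A}B_0\hookrightarrow W\cup_{\partial_0 A\times I}V$ reads
\[
\tau(f)=(i_{A_0})_*\tau(A_0\hookrightarrow W)+(i_{B_0})_*\tau(B_0\hookrightarrow V)-j_*\tau(\partial_0 A\hookrightarrow\partial_0 A\times I),
\]
where $j:\partial_0 A\hookrightarrow A_0\cup_{\partial_0 A}B_0$ is the inclusion. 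The last term vanishes since $\partial_0 A\times I$ is a trivial $h$-cobordism, and the first two summands are by definition $(i_{A_0})_*\tau(W)$ and $(i_{B_0})_*\tau(V)$, yielding the claimed identity.

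The only real obstacle is the bookkeeping around the corner structure: both $W$ and $V$ are really manifolds with corners once one remembers the piece $\partial_0 A\times I\subseteq \partial W\cap\partial V$, so some care is needed to identify $W\cup_{\partial_0 A\times I}V$ with a genuine cobordism in the chosen CAT. At the level of the Whitehead group, however, everything is controlled by the underlying homotopy types, so the conclusion is insensitive to these geometric subtleties and the sum theorem gives the formula directly.
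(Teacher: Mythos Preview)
Your argument is correct. The paper does not actually supply a proof of this lemma; it simply states that, using the pair interpretation of the Whitehead group recalled just above (elements are pairs $(X,A)$ with addition $(X,A)+(Y,A)=(X\cup_A Y,A)$ and pushforward by mapping cylinders), ``the following can be verified.'' Your use of the sum theorem from \cite{MilnorWhitehead} is exactly the standard way to carry out that verification, and your handling of the corner structure and the vanishing of the $\partial_0 A\times I$ term is appropriate.
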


\subsection{Distinguishing Smooth Structures}
\begin{prop}\label{prop: distinguishing smooth structures}
Suppose $X$, $G$ and $M$ are as in the hypotheses of Proposition \ref{prop: Wh rationally injective (top)}.
Let $W_0$ and $W_1$ be controlled $h$-cobordisms as in Section \ref{section: construction}.
If $\tau(W_0)\neq\tau(W_1)$ in $\op{Wh}_1(\pi_1 M)\otimes\Q$, then $(X_{W_0},f_{W_0})$ and $(X_{W_1},f_{W_1})$ are not isotopic $G$-smoothings.
\end{prop}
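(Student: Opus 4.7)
Suppose for contradiction that the two smoothings are isotopic, and unpack the definition to obtain a $G$-diffeomorphism $h := f_{W_1}^{-1}\circ\alpha(-,1):X_{W_0}\to X_{W_1}$ together with a topological $G$-isotopy $\alpha$ from $f_{W_0}$ to $f_{W_1}\circ h$ through $G$-homeomorphisms. The plan is to convert this data into a topological equivalence of the $h$-cobordism pairs $(X'/G\cup W_i/G,\,X'/G)$ rel $X'/G$, deduce equality of the pushforwards $(i_E)_*\tau(W_i)\in\op{Wh}_1(X'/G)$, and then invoke Proposition \ref{prop: Wh rationally injective (top)} together with the splitting $\pi_1 M\times G\to \pi_1 M$ to conclude rational equality in $\op{Wh}_1(\pi_1 M)\otimes\Q$.

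For the normalization, observe that the fixed set of each $X_{W_i}$ is $M$ with normal bundle $\nu$ by the construction of $X_{W_i}$. Equivariant uniqueness of tubular neighborhoods therefore allows me to isotope $h$ through $G$-diffeomorphisms so that $h(D\nu)=D\nu$ and then $h|_{D\nu}=\op{id}$. Restricting $h$ to the complement of $\mathring{D}\nu$ and quotienting by the free $G$-action yields a diffeomorphism
\[
\bar h:X'/G\cup W_0/G\longrightarrow X'/G\cup W_1/G
\]
that is the identity on $E=S\nu/G$. The original topological isotopy, descended after an equivariant isotopy extension rel $D\nu$, exhibits $\bar h$ as topologically isotopic to the canonical homeomorphism built from the swindles $\Phi_i$ of Proposition \ref{prop: swindle}; the latter homeomorphism is by construction the identity on $X'/G$. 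Consequently $(X'/G\cup W_i/G,\,X'/G)$, $i=0,1$, become topologically isomorphic $h$-cobordisms rel $X'/G$.

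Topological invariance of Whitehead torsion now yields equality of torsions in $\op{Wh}_1(X'/G)$, and Lemma \ref{lem: h-cobordism sum} identifies these torsions with $(i_E)_*\tau(W_i)$, where $i_E:E\hookrightarrow X'/G$ is the inclusion of the lens space bundle into the free quotient. Proposition \ref{prop: Wh rationally injective (top)} asserts that $(i_E)_*$ is rationally injective, so $\tau(W_0)=\tau(W_1)$ in $\op{Wh}_1(E)\otimes\Q\cong \op{Wh}_1(\pi_1 M\times G)\otimes\Q$ (using Lemma \ref{lem: pi_1 lens space bundle}). Projecting onto the $\op{Wh}_1(\pi_1 M)\otimes\Q$-summand via the retraction induced by $\pi_1 M\times G\to\pi_1 M$ gives the required equality, contradicting the hypothesis.

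The main obstacle is the descent of the topological isotopy to the quotient in the second paragraph: arranging that the topological $G$-isotopy, which a priori lives in the ambient space $X\times I$, can be rearranged so that after quotienting it identifies $\bar h$ with the swindle-based homeomorphism \emph{rel $X'/G$}. This requires a topological isotopy extension argument (rel $D\nu$ upstairs) that respects the stratification $M\subset X$, which is where the codimension hypothesis on $X^G$ naturally enters. Once this identification is secured, the remainder of the argument proceeds formally from Proposition \ref{prop: Wh rationally injective (top)} and the two lemmas cited above.
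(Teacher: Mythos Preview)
Your reduction to $(i_E)_*\tau(W_0)=(i_E)_*\tau(W_1)$ in $\op{Wh}_1(X'/G)\otimes\Q$ followed by Proposition~\ref{prop: Wh rationally injective (top)} matches the paper's endgame, but the gap you flag is real and is not resolved by isotopy extension. After normalizing, $\bar h$ need not carry $X'/G$ into $X'/G$, and the topological isotopy $\alpha_t$ does not respect the decomposition $X=X'\cup D\nu$ at intermediate times, so there is no direct descent to an isotopy of pairs rel $X'/G$. What you are implicitly asserting is that the free-quotient $h$-cobordism $V'/G$ (where $V=X_{W_0}\times I$ and a tubular neighborhood of $M\times I$ has been excised) is a product; this is an \emph{isovariant} triviality statement and does not follow formally from $V$ being an equivariant product. (A minor point: your final projection to $\op{Wh}_1(\pi_1 M)\otimes\Q$ is vacuous, since that group vanishes under the Farrell--Jones hypothesis as $\pi_1 M$ is torsion free; the target in the statement should be read as $\op{Wh}_1(\pi_1 M\times G)$, and one stops there.)

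The paper supplies exactly this missing step by invoking equivariant Whitehead torsion rather than manipulating $\bar h$. It forms $V'$ as above, so that $V'/G$ is an $h$-cobordism from $X'/G\cup W_0$ to $X'/G\cup W_1$, and then uses the \emph{topological} identification $\alpha:V\to X\times I$ to cut $V'$ along $Z=\alpha^{-1}(\alpha\circ d_0(S\nu)\times I)$ into a piece homeomorphic to $X'\times I$ and a piece $\hat W$ with $\hat W/G$ an $h$-cobordism from $W_0$ to $W_1$. Lemma~\ref{lem: h-cobordism sum} and Proposition~\ref{prop: Wh rationally injective (top)} then give $\tau(V'/G)\neq 0$ rationally whenever $\tau(W_0)\neq\tau(W_1)$. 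Thus the isovariant Whitehead torsion of $V$ is nonzero, and the weak gap hypothesis (via \cite[4.49]{LuckTransformation}) forces the isovariant torsion to inject into the equivariant torsion, contradicting $V\cong X_{W_0}\times I$. So the codimension condition enters through this isovariant-to-equivariant injectivity, not through an isotopy extension of the kind you propose.
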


\begin{proof}
To ease notation, we assume $M$ is the only component of the fixed set.

Suppose otherwise.
Then there is a smooth $G$-manifold $V$, a $G$-homeomorphism $\alpha:V\rightarrow X\times I$ and $G$-diffeomorphisms
\[
d_i:X_{W_i}\rightarrow\partial_i V
\]
satisfying $(\alpha|_{\partial_i V})\circ d_i=f_{W_i}$ where $\partial_i V=\alpha^{-1}(X\times\{i\})$.

We decompose $V$ into submanifolds with boundary as follows.

By abuse of notation, write $M\times I$ for the preimage $\alpha^{-1}(M\times I)$.
Let $\nu$ be the normal bundle of $M$.
Remove the normal bundle of $M\times I$ to obtain a smooth $G$-manifold $V'$ with boundary
\[
\partial V'=(X'\cup_{S\nu}\tilde{W}_0)\cup(S\nu\times I)\cup(X'\cup_{S\nu}\tilde{W}_1).
\]
The $G$-action on $V'$ is free and $V'/G$ is an $h$-cobordism of manifolds with boundary.

Now, let $Z:=\alpha^{-1}(\alpha\circ d_0(S\nu)\times I)$ where $S\nu=\partial X'$ is where $\tilde{W}_0$ is attached.
Note that $Z\cap (X'\cup_{S\nu}\tilde{W}_1)=S\nu$, the submanifold where $\tilde{W}_1$ is attached to $X'$.
Let $\hat{W}\subseteq V'$ denote the submanifold bounded by $Z,\tilde{W}_0,\tilde{W}_1$ and $S\nu\times I$.
The complement of $\hat{W}$ is homeomorphic to $X'\times I$.

Note that $Z$ is $G$-homeomorphic to $S\nu\times I$ and $\hat{W}/G$ is an $h$-cobordism of the manifolds with boundary $W_0$ and $W_1$.
Since $\tau(W_0)\neq\tau(W_1)$, $\hat{W}/G$ cannot be a trivial $h$-cobordism so $\tau(\hat{W}/G)\neq0$.
Applying Lemma \ref{lem: h-cobordism sum} and Proposition \ref{prop: Wh rationally injective (top)}, we see that $V'/G$ is a nontrivial $h$-cobordism of manifolds with boundary.

This shows that the smooth $G$-manifold $V$ is a nontrivial isovariant $h$-cobordism (see \cite[4.D]{LuckTransformation}).
Under our hypotheses, the weak gap condition \cite[4.49]{LuckTransformation} is satisfied so the isovariant Whitehead group injects into the equivariant Whitehead group.
Therefore, $V$ is not equivariantly diffeomorphic to a product $X_{W_0}\times I$.
\end{proof}
\begin{figure}
\includegraphics[scale=0.65]{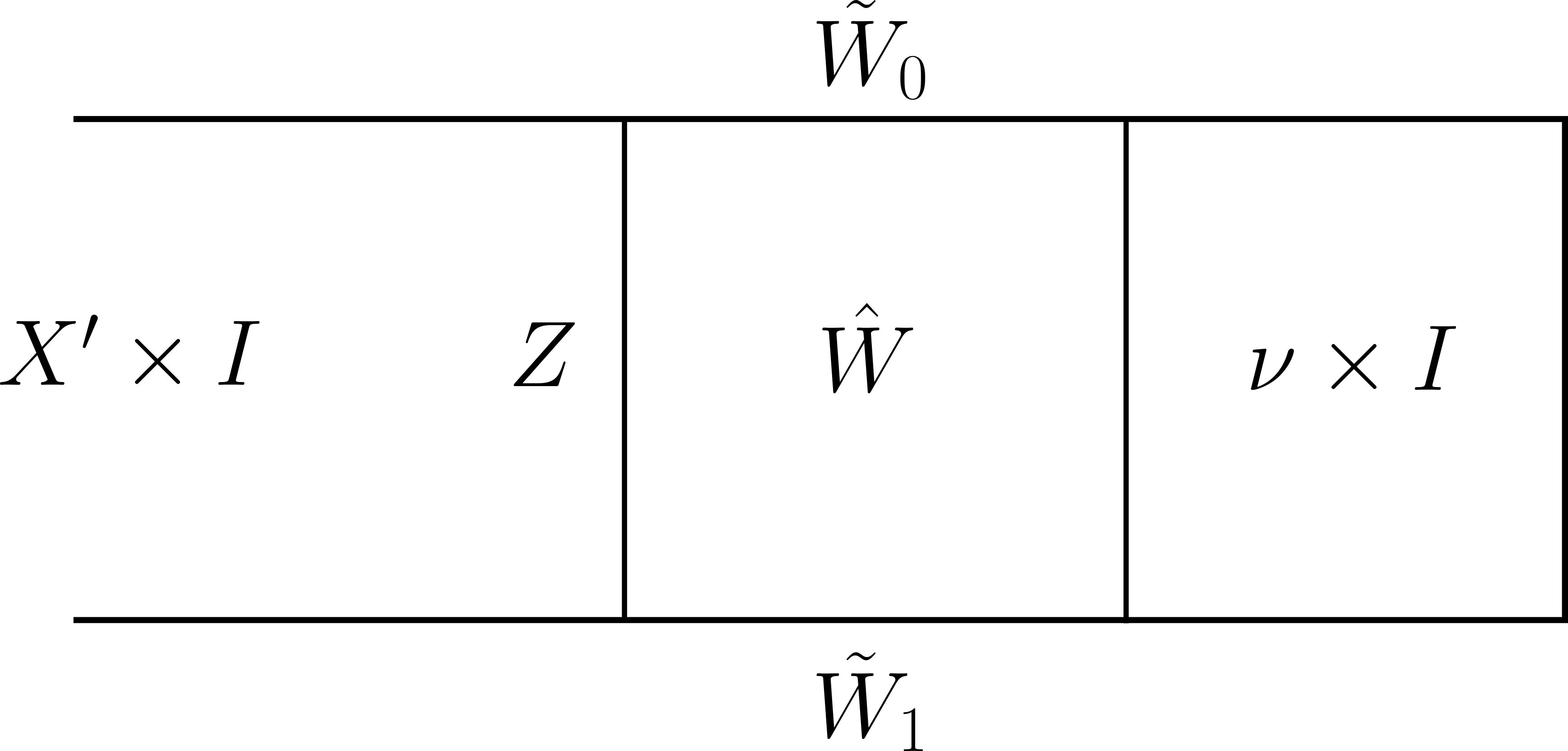}
\label{fig: V}
\caption{$V$ in the proof of Proposition \ref{prop: distinguishing smooth structures}}
\end{figure}

\section{Control and Assembly}\label{section: control and assembly}
In this section, we use the assembly map and a result of Quinn to realize certain elements of the Whitehead group as the torsion of controlled, inertial $h$-cobordisms.
The ideas here have also been studied by Steinberger--West \cite{SteinbergerWest} and Steinberger \cite{Steinberger}.

\subsection{Controlled $h$-Cobordisms and Homology}
Let $p:E\rightarrow M$ be a bundle with connected fiber $F$ and suppose $M$ is connected.
Denote $\pi:=\pi_1 M$.
Following \cite{FarrellLuckSteimle}, define a functor $\underline{E}:Or(\pi)\rightarrow Top$ by sending each orbit $\pi/H$ to the pullback bundle over the cover of $M$ corresponding to $H$.
Let $\mathbf{E}:Top\rightarrow Sp$ be a functor from spaces to spectra.
Define $\mathbf{E}(p)$ to be the composite $\mathbf{E}\circ\underline{E}$.
For a $\pi$-CW-complex $X$, we may define the Davis--L\"uck equivariant homology groups $H^{\pi}_*\left(X;\mathbf{E}(p)\right)$.
We are primarily interested in the case $\mathbf{E}$ is the Whitehead spectrum $\op{Wh}$.

In \cite{QuinnEnds2}, Quinn defines homology with coefficients in a spectrum valued functor $\mathbf{E}:Top\rightarrow Sp$.
Let $\mathbb{H}(M;\mathbf{E})$ denote this homology spectrum and let $\mathbb{H}_k(M;\mathbf{E})$ denote the homotopy groups.
He shows that a particular homology group $\mathbb{H}_1(M;\mc{S}(p))$ is in bijection with $h$-cobordisms $(W;E,E')$ controlled over $M$ where $p:E\rightarrow M$.
Farrell--L\"uck--Steimle compare Quinn's homology group with the Davis--L\"uck equivariant homology theory.

\begin{prop}\label{prop: classifying controlled h-cobordism}
Suppose $M$ is an aspherical manifold and $E$ is a closed manifold.
Let $\tilde{M}$ be the universal cover of $M$ and let $\pi=\pi_1 M$.
Let $p:E\rightarrow M$ be a bundle with connected fiber $F$ and let $\phii:(W;E,E')\rightarrow M$ be a controlled $h$-cobordism.
There is an invariant $q(\phii,p)\in H_1^{\pi}(\tilde{M};\op{Wh}(p))$ such that the following hold.
\begin{enumerate}
\item Two controlled $h$-cobordisms are controlled isomorphic if and only if their invariants are equal.
\item When $\dim E\ge5$, all invariants in this group can be realized.
\end{enumerate}
\end{prop}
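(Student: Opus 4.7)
The plan is to piece together two existing results: Quinn's classification of controlled $h$-cobordisms by his controlled homology, and Farrell--L\"uck--Steimle's identification of Quinn's controlled homology with Davis--L\"uck equivariant homology.

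First, I would invoke Quinn's classification theorem from \cite{QuinnEnds2}, already cited in the text above the proposition. For a bundle $p:E\rightarrow M$ with $E$ closed, Quinn produces a classifying invariant for controlled $h$-cobordisms with values in the controlled Whitehead homology group $\mathbb{H}_1(M;\mc{S}(p))$, where the coefficient functor $\mc{S}$ assigns to an open subset $U\subseteq M$ a Whitehead-type spectrum of the pullback $p^{-1}(U)$. The classification asserts that controlled isomorphism classes of controlled $h$-cobordisms over $M$ with boundary $E$ are in bijection with $\mathbb{H}_1(M;\mc{S}(p))$ (addressing (1)), and that every class is realized geometrically once $\dim E \ge 5$ (addressing (2)). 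Thus the proposition already holds, but with Quinn's group as the target.

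Second, I would apply the Farrell--L\"uck--Steimle comparison result \cite{FarrellLuckSteimle}. Since $M$ is aspherical, $\tilde{M}$ is a model for $E\pi$, and the functor $\underline{E}:\op{Or}(\pi)\rightarrow\op{Top}$ defined in the excerpt assigns to $\pi/H$ the pullback of $p$ over the cover of $M$ corresponding to $H$. Under this setup, Quinn's local coefficient system $\mc{S}(p)$ should be identified, via the comparison, with the orbit-category functor $\op{Wh}(p) := \op{Wh}\circ\underline{E}$, yielding a natural isomorphism
\[
\mathbb{H}_1(M;\mc{S}(p))\cong H_1^{\pi}(\tilde{M};\op{Wh}(p)).
\]
Defining $q(\phii,p)$ as the image of Quinn's invariant under this isomorphism transports both the bijection in (1) and the surjectivity in (2) to the target stated in the proposition.

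The main obstacle will be bookkeeping around coefficient systems: one must check that the spectrum-valued functor Quinn builds geometrically, by taking Whitehead spectra of pullbacks of $p$ over open sets of $M$, really does correspond under the Farrell--L\"uck--Steimle comparison to the orbit-category functor $\op{Wh}\circ\underline{E}$ used in the target. This is essentially an identification between two descriptions of ``the Whitehead spectrum of the fiber bundle over the universal cover, with $\pi$-action''. Once this identification of coefficient systems is pinned down, both claims follow immediately from the two cited results, with no further geometric input needed.
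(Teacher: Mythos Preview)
Your proposal is correct and matches the paper's own proof exactly: the paper simply cites \cite[1.2]{QuinnEnds2} for Quinn's classification and \cite[Lemma 4.9]{FarrellLuckSteimle} for the identification of Quinn's homology with $H_1^{\pi}(\tilde{M};\op{Wh}(p))$. Your discussion of the coefficient-system bookkeeping is a reasonable elaboration, but the paper does not spell this out and simply defers to the cited lemma.
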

\begin{proof}
This follows from \cite[1.2]{QuinnEnds2} and the identification of Quinn's homology group with $H_1^{\pi}(\tilde{M};\op{Wh}(p))$ in \cite[Lemma 4.9]{FarrellLuckSteimle}.
\end{proof}

\subsection{Assembly}

Quinn also defines an assembly map $\mathbb{H}_1(M;\mc{S}(p))\rightarrow \op{Wh}(\pi_1 E)$ which can be compared to the Farrell--Jones assembly in the Davis--L\"uck formulation.
Geometrically, Quinn's assembly sends a controlled $h$-cobordism $(W;E,E')$ to the torsion $\tau(W,E)$ where we consider $(W;E,E')$ as an ``uncontrolled'' $h$-cobordism.
Farrell--L\"uck--Steimle show that, when $M$ is aspherical, the Quinn assembly map has the same image as the Davis--L\"uck assembly map \cite[Lemma 4.9.iii]{FarrellLuckSteimle}.
Finally, they show that the Davis--L\"uck assembly map
\[
H_1^{\pi}(\tilde{M};\op{Wh}(p))\rightarrow H_1^{\pi}(pt;\op{Wh}(p))=\pi_1(\op{Wh}(E))
\]
is split injective provided $M$ is aspherical, $p:E\rightarrow M$ is $\pi_1$-surjective and $\pi$ satisfies the $K$-theoretic Farrell--Jones conjecture.

\subsection{Some Additional Simplifications}
Returning to our geometric situation, we have a closed aspherical $n$-manifold $M$ whose fundamental group $\pi$ satisfies the $K$-theoretic Farrell--Jones conjecture.
Moreover, the map $p:E\rightarrow M$ is a lens space bundle with fiber $F$.
The only orbits involved in the construction of the Davis--L\"uck homology spectrum is the orbit $G/pt$.
Since $\op{Wh}(p)(G/pt)=\op{Wh}(F)$, there is an isomorphism $H_1^{\pi}(\tilde{M};\op{Wh}(p))\cong H_1(M;\op{Wh}(F))$ where the right hand side is a twisted generalized homology group.

We may simplify this further.
Recalling that $\pi_1E\cong G\times\pi$, we see that the action of $\pi$ on the fundamental group $\pi_1 F$ is trivial.
Linearization gives an isomorphism
\[
H_1(M;\op{Wh}(F))\rightarrow H_1(M;\op{Wh}_{K}(F))
\]
of twisted generalized homology groups.
But since the action of $\pi$ on $\op{Wh}_{K}(F)$ is determined entirely by its action on $\pi_1 F$, the homology group on the right hand side is untwisted.

The following proposition follows from Proposition \ref{prop: distinguishing smooth structures}, Proposition \ref{prop: classifying controlled h-cobordism} and the above discussion.

\begin{prop}\label{prop: equiv homology suffices}
Each element of $H_1(M;\op{Wh}_{K}(F))^{(-1)^{n+1}\tau_1}$ gives a unique $G$-smoothing.
Here, the homology group is untwisted.
\end{prop}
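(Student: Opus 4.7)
The plan is to assemble the three preceding results into a single injective map. Given $x \in H_1(M;\op{Wh}_K(F))^{(-1)^{n+1}\tau_1}$, I would first apply \refprop{prop: classifying controlled h-cobordism} to realize $x$ as the invariant $q(\phii_x,p)$ of some controlled $h$-cobordism $(W_x;E,E_x')$ over $M$; the identifications recorded just above express the classifying group $H_1^\pi(\tilde M;\op{Wh}(p))$ as the untwisted $H_1(M;\op{Wh}_K(F))$, so this realization makes sense. Next, using the torsion formula $\tau(W,E)=(-1)^{n+1}\tau_1\cdot\tau(W,E')$, I would observe that the $(-1)^{n+1}\tau_1$-invariance of $x$ forces the assembled torsion to lie in $\op{Wh}_1(\pi_1 E)^{(-1)^{n+1}\tau_1}$, and then invoke the Jahren--Kwasik result that strongly inertial $h$-cobordisms form a finite-index subgroup there to conclude (after replacing $x$ by a multiple, which is harmless since distinctness will only be needed rationally) that $W_x$ may be taken strongly inertial. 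Fixing a homeomorphism $E_x'\cong E$ then lets us feed $W_x$ into the construction of \refsec{section: construction}, yielding a $G$-smoothing $(X_{W_x},f_{W_x})$.

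To establish injectivity of the assignment $x\mapsto (X_{W_x},f_{W_x})$, I would apply \refprop{prop: distinguishing smooth structures}: two distinct invariant classes $x\neq y$ yield non-isotopic $G$-smoothings provided $\tau(W_x)\neq\tau(W_y)$ in $\op{Wh}_1(\pi_1 E)\otimes\Q$. Since the rational torsion of $W_x$ is the image of $q(\phii_x,p)$ under the uncontrolled Quinn assembly map, this reduces to showing that the Davis--L\"uck assembly $H_1^\pi(\tilde M;\op{Wh}(p))\to\op{Wh}_1(\pi_1 E)$ is rationally injective. That is exactly the split injectivity recorded at the end of \refsec{section: control and assembly}, valid because $M$ is aspherical, $p$ is $\pi_1$-surjective (as $\pi_1 E\cong\pi\times G$ by \reflem{lem: pi_1 lens space bundle}), and $\pi=\pi_1 M$ satisfies the $K$-theoretic Farrell--Jones conjecture.

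The main obstacle I anticipate is the bookkeeping in the first step: one must match the algebraic involution on $H_1(M;\op{Wh}_K(F))$ with the geometric end-swap on controlled $h$-cobordisms compatibly with the assembly map, and one must upgrade the finite-index Jahren--Kwasik statement to a literal identification $E_x'\cong E$ so that the gluing in \refsec{section: construction} goes through verbatim. Both issues dissolve once we accept that distinctness is only required rationally, as already built into \refprop{prop: distinguishing smooth structures}; the remainder of the argument is then a diagram chase through the identifications compiled in \refsec{section: control and assembly}.
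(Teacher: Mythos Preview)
Your proposal is correct and takes precisely the route the paper intends: its own proof is the single sentence that the proposition follows from \refprop{prop: distinguishing smooth structures}, \refprop{prop: classifying controlled h-cobordism}, and the preceding assembly discussion, and you have simply unpacked that sentence, making explicit the Jahren--Kwasik finite-index step (which the paper records in \refsec{section: Background} for exactly this purpose) and the split injectivity of assembly needed to feed into \refprop{prop: distinguishing smooth structures}. One small remark: the reason the assembled torsion lands in the invariant subgroup is that assembly is a map of spectra with involution, not the torsion duality formula itself, and in that formula the exponent should carry $\dim E$ rather than $n=\dim M$ (the lens-space fiber is odd-dimensional, so the two differ by a sign) --- but this does not affect your argument.
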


\subsection{Involutions on $H_1\left(M;\op{Wh}_{K}(F)\right)$}
We now reduce the study of the involution $\tau_1$ on $H_1\left(M;\op{Wh}_{K}(F)\right)$ to the study of the involution on $K_{-1}(\Z[G])$.

\begin{prop}\label{prop: H_1(M;Wh(G)) rational computation}
Suppose $X$ is a CW complex.
Then
\[
H_1(X;\op{Wh}_{K}(F))_{(0)}\cong H_0(X;\op{Wh}(G))_{(0)}\oplus H_2(X;K_{-1}(\Z[G]))_{(0)}.
\]
\end{prop}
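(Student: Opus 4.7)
The plan is to analyze $H_1(X;\op{Wh}_K(F))$ via the Atiyah--Hirzebruch spectral sequence, exploiting the fact that rational spectra split as generalized Eilenberg--MacLane spectra; this reduces the problem to identifying which homotopy groups of $\op{Wh}(G)$ survive rationally, a question answered by classical theorems of Swan and Carter.

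First I would reduce to $\op{Wh}(G)$. Since $\op{Wh}_K(-)$ depends only on the fundamental group (it is built from $B\pi_1(-)$ and the integral group ring $\Z[\pi_1(-)]$), and $F$ is a lens space with $\pi_1 F=G$, the natural map $F\to BG$ induces an equivalence $\op{Wh}_K(F)\simeq\op{Wh}(G)$. Hence $H_1(X;\op{Wh}_K(F))_{(0)}\cong H_1(X;\op{Wh}(G))_{(0)}$, and it suffices to compute the latter.

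Next I would use that, after smashing with $H\Q$, any spectrum splits as a wedge of shifted rational Eilenberg--MacLane spectra, giving
\[
\op{Wh}(G)_{(0)}\simeq\bigvee_n\Sigma^n H(\op{Wh}_n(G)\otimes\Q).
\]
Taking $X$-homology then yields
\[
H_1(X;\op{Wh}(G))_{(0)}\cong\bigoplus_n H_{1-n}(X;\op{Wh}_n(G))_{(0)}.
\]
The coefficient groups are read off from the defining cofiber sequence $BG_+\wedge K(\Z)\to K^{-\infty}(\Z[G])\to\op{Wh}(G)$ combined with the vanishing $\pi_nK(\Z)=0$ for $n<0$: we obtain $\op{Wh}_1(G)$ the classical Whitehead group, $\op{Wh}_0(G)=\tilde K_0(\Z[G])$, and $\op{Wh}_q(G)=K_q(\Z[G])$ for $q\le-1$.

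Then I would invoke two classical vanishing theorems to collapse the sum. Swan's theorem says $\tilde K_0(\Z[G])$ is a finite abelian group for $G$ finite, so it vanishes after tensoring with $\Q$. Carter's theorem says $K_q(\Z[G])=0$ for $q\le-2$. Combined with $H_{1-n}(X;-)=0$ for $n\ge2$ (negative homology degree), the only surviving summands are $n=1$, contributing $H_0(X;\op{Wh}_1(G))_{(0)}$, and $n=-1$, contributing $H_2(X;K_{-1}(\Z[G]))_{(0)}$. This yields the claimed decomposition, with $\op{Wh}(G)$ on the right read as the classical Whitehead group $\op{Wh}_1(G)$ in accordance with the notation of the introduction.

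The main obstacle is invoking the rational splitting of the Whitehead spectrum cleanly; this is where the potential higher differentials and extensions of the Atiyah--Hirzebruch spectral sequence are dispatched in one stroke. Once that structural input is in place, everything else is bookkeeping with Swan's and Carter's theorems and the dimension axiom for ordinary homology.
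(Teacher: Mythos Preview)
Your argument is correct and matches the paper's proof in substance: both identify the low homotopy groups of the coefficient spectrum as $\op{Wh}_1(G)$, $\tilde K_0(\Z[G])$, and $K_{-1}(\Z[G])$, invoke Swan's finiteness of $\tilde K_0(\Z[G])$ and Carter's vanishing of $K_q(\Z[G])$ for $q\le -2$, and read off the answer from the Atiyah--Hirzebruch spectral sequence. The only cosmetic difference is that you package the collapse of the rational AHSS as the splitting of $\op{Wh}(G)_{(0)}$ into Eilenberg--MacLane summands, whereas the paper argues directly on the $E^2$-page (the zeroth column splits off via a basepoint, and the remaining differentials and extensions die because the relevant groups are finite or zero); your formulation dispatches the differentials and extension problems in one stroke, but the content is the same.
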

\begin{proof}
Since we are only interested in the first homology group, the Atiyah-Hirzebruch spectral sequence is easy to analyze.
Its $E^2$-page is
\[
\begin{tikzpicture}[scale=1.5]
\node (A) at (0,2) {$H_0(X;\op{Wh}(G))$};\node (B) at (3,2) {$H_1(X;\op{Wh}(G))$};\node (C) at (6,2) {$H_2(X;\op{Wh}(G))$};
\node (D) at (0,1) {$H_0(X;\tilde{K}_0(\Z[G]))$};\node (E) at (3,1) {$H_1(X;\tilde{K}_0(\Z[G]))$};\node (F) at (6,1) {$H_2(X;\tilde{K}_0(\Z[G]))$};
\node (G) at (0,0) {$H_0(X;K_{-1}(\Z[G]))$};\node (H) at (3,0) {$H_1(X;K_{-1}(\Z[G]))$};\node (I) at (6,0) {$H_2(X;K_{-1}(\Z[G]))$};
\end{tikzpicture}
\]
but the left column splits off, $\tilde{K}_0(\Z[G])$ is finite and Carter's vanishing theorem implies that there are no lower rows.
Therefore, $E^{\infty}_{0,1}=E^2_{0,1}\cong\op{Wh}_1(G)$, $E^{\infty}_{1,0}$ is a finite group and $E^{\infty}_{2,-1}=E^{2}_{2,-1}\cong H_2(X;K_{-1}(\Z[G]))$.
\end{proof}

We would like to endow the right hand side of the expression in Proposition \ref{prop: H_1(M;Wh(G)) rational computation} with an involution such that the decomposition of $H_1(X;\op{Wh}_{K}(F))_{(0)}$ above respects the involution.
On $H_0(X;\op{Wh}_1(G))$, the involution is just given by $\tau_1$ on $\op{Wh}_1(G)$.
The map $H_0(X;\op{Wh}_1(G))\rightarrow H_1(X;\op{Wh}_{K}(F))$ respects the involution since it is induced by the inclusion of a point.

We show there is an involution on $H_2(X;K_{-1}(\Z[G]))$ and a quotient map $H_1(X;\op{Wh}_{K}(F))\rightarrow H_2(X;K_{-1}(\Z[G]))$ respecting the involution.
We do this by considering the filtration of the left hand side.
Recall that Atiyah--Hirzebruch spectral sequence is given by a filtration arising from skeleta of $X$.
If $X^{(i)}$ denotes the $i$-skeleton, then the filtration on $H_1(X;\op{Wh}_{K}(F))$ is given by
\[
F_0\subseteq F_1\subseteq F_2\subseteq F_3\subseteq\cdots\subseteq H_1(M;\op{Wh}_{K}(F))
\]
where $F_i=\op{im}(H_1(X^{(i)};\op{Wh}_{K}(F))\rightarrow H_1(X;\op{Wh}_{K}(F)))$ and $E^{\infty}_{i,1-i}=F_i/F_{i-1}$.
In particular, $F_i/F_{i-1}=0$ for $i\ge3$.
This implies $F_2=F_3=\cdots=H_1(X;\op{Wh}_{K}(F))$.
So
\begin{equation}\label{eq: H_2(X;K_-1) identification}
H_2(X;K_{-1}(\Z[G]))\cong H_1(X;\op{Wh}_{K}(F))/H_1(X^{(1)};\op{Wh}_{K}(F)).
\end{equation}
The following proposition becomes immediate.

\begin{prop}\label{prop: H_1(M;Wh) integral computation}
If $X\rightarrow Y$ is a map of CW complexes then there is a commuting diagram of abelian groups with involution
\[
\begin{tikzpicture}[scale=2]
\node (A) at (0,1) {$H_0(X;\op{Wh}_1(G))$};\node (B) at (2,1) {$H_1(X;\op{Wh}_{K}(F))$};\node (C) at (4,1) {$H_2(X;K_{-1}(\Z[G]))$};
\node (D) at (0,0) {$H_0(Y;\op{Wh}_1(G))$};\node (E) at (2,0) {$H_1(Y;\op{Wh}_{K}(F))$};\node (F) at (4,0) {$H_2(Y;K_{-1}(\Z[G]))$};
\path[->] (A) edge (B) (B) edge (C) (A) edge (D) (B) edge (E) (C) edge (F) (D) edge (E) (E) edge (F);
\end{tikzpicture}
\]
where the left horizontal maps are injective, the right horizontal maps are surjective, the horizontal composites are trivial and the rows are exact after rationalizing.
\end{prop}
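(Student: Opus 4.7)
The plan is to build both horizontal rows from the skeletal filtration on $H_1(X;\op{Wh}_{K}(F))$ used in the proof of Proposition~\ref{prop: H_1(M;Wh(G)) rational computation}. Set $F_i := \op{im}\bigl(H_1(X^{(i)};\op{Wh}_{K}(F))\to H_1(X;\op{Wh}_{K}(F))\bigr)$. From the Atiyah--Hirzebruch analysis in that proof I would extract: $F_0 \cong H_0(X;\op{Wh}_1(G))$; $F_1/F_0$ is a subquotient of $H_1(X;\tilde{K}_0(\Z[G]))$ and hence finite; $F_2 = H_1(X;\op{Wh}_{K}(F))$; and $F_2/F_1 \cong H_2(X;K_{-1}(\Z[G]))$ by \refeq{eq: H_2(X;K_-1) identification}. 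I would then define the left horizontal map as the inclusion $F_0\hookrightarrow F_2$ and the right horizontal map as the projection $F_2\twoheadrightarrow F_2/F_1$. Injectivity of the former, surjectivity of the latter, and triviality of the composite (since $F_0\subseteq F_1$) are then automatic, and naturality in the map $X\to Y$ follows from naturality of the AHSS and the skeletal filtration, after cellular approximation.

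For the rational exactness statement, the finiteness of $F_1/F_0$ gives $F_1\otimes\Q = F_0\otimes\Q$, so the short exact sequence $0\to F_0\to F_2\to F_2/F_1\to 0$ becomes exact after $-\otimes\Q$.

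For compatibility with the involutions, the spectrum-level involution $\tau_1$ on $\op{Wh}_{K}(F)$ induces a self-map of the AHSS covering the identity on $X$; this self-map preserves the skeletal filtration and acts on each page. In particular, every $F_i$ is invariant, the vertical maps induced by $X\to Y$ are equivariant, and the induced involutions on the subquotients $F_0$ and $F_2/F_1$ would be identified with the coefficientwise involutions on $H_0(X;\op{Wh}_1(G))$ and $H_2(X;K_{-1}(\Z[G]))$ respectively (the latter being taken as the working definition of the involution on $H_2(X;K_{-1}(\Z[G]))$ for this proposition).

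I expect the main obstacle to be precisely this last identification: matching the involutions on the $E^\infty$-pieces $E^\infty_{0,1}$ and $E^\infty_{2,-1}$ with the coefficientwise involutions on $\pi_1\op{Wh}_{K}(F) = \op{Wh}_1(G)$ and $\pi_{-1}\op{Wh}_{K}(F) = K_{-1}(\Z[G])$. This should reduce to naturality of the AHSS with respect to self-maps of the coefficient spectrum, but spelling it out requires some care with the edge-type isomorphisms $E^\infty_{0,1} \cong H_0(X;\pi_1\op{Wh}_{K}(F))$ and $E^\infty_{2,-1} \cong H_2(X;\pi_{-1}\op{Wh}_{K}(F))$.
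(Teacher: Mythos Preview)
Your approach is essentially the paper's: the proposition is declared immediate from the skeletal filtration and the identification \refeq{eq: H_2(X;K_-1) identification}, exactly as you outline. Your anticipated obstacle is a non-issue there---the involution on $H_2(X;K_{-1}(\Z[G]))$ is \emph{defined} via \refeq{eq: H_2(X;K_-1) identification}, and for $H_0$ the paper simply notes that the left horizontal map is induced by the inclusion of a point and hence respects the spectrum-level involution, so no separate edge-isomorphism compatibility check is needed.
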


Note that the involution on $H_0(X;\op{Wh}_1(G))$ is given by its identification with $H_1(\pi_0 X;\op{Wh}_{K}(F))$.
So, understanding the involution on this homology group amounts to understanding the involution on the spectrum $\op{Wh}_{K}(F)$.
The involution on the group $H_2(X;K_{-1}(\Z[G]))$ is defined by the identification (\ref{eq: H_2(X;K_-1) identification}) above.
To compute the involution, we reduce to the case where $X$ is a surface by noting that every element of $H_2(X;\Z)$ is of the form $f_*[\Sigma_g]$ where $f:\Sigma_g\rightarrow M$ is a map from a closed oriented surface.
Moreover, every closed oriented surface admits a map to $T^2$ which is an isomorphism on $H_2$.
By considering these maps, Proposition \ref{prop: H_1(M;Wh) integral computation} gives the following result.

\begin{prop}\label{prop: H_2(X;K)}
Suppose $H_2(X;\Z)$ is a finitely generated group of rank $r$.
There is a map of abelian groups with involution
\[
H_2(T^2;K_{-1}(\Z[G]))^{r}\rightarrow H_2(X;K_{-1}(\Z[G]))
\]
which is an isomorphism when restricted to the torsion free part.
\end{prop}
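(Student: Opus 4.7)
The plan is to construct $\Phi$ explicitly using surface representatives of homology classes, and to deduce involution-compatibility from the naturality statement in Proposition \ref{prop: H_1(M;Wh) integral computation}.

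First I would choose a basis $e_1, \dots, e_r$ of $H_2(X;\Z)/\mathrm{torsion}$. By Thom's classical representability result for two-dimensional integral homology classes, one can write $e_i = (f_i)_*[\Sigma_{g_i}]$ for some closed oriented surface $\Sigma_{g_i}$ together with a continuous map $f_i : \Sigma_{g_i} \to X$. For each $i$, I would pick a degree-one pinch map $h_i : \Sigma_{g_i} \to T^2$, collapsing $g_i - 1$ of the handles. Since $h_i$ sends fundamental class to fundamental class, $(h_i)_*$ is an isomorphism on $H_2(-;A)$ for any abelian group $A$, and in particular on $H_2(-; K_{-1}(\Z[G]))$.

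Define $\Phi : H_2(T^2; K_{-1}(\Z[G]))^{r} \to H_2(X; K_{-1}(\Z[G]))$ by
\[
\Phi(a_1, \dots, a_r) := \sum_{i=1}^{r} (f_i)_* (h_i)_*^{-1}(a_i).
\]
Involution-compatibility follows immediately by applying Proposition \ref{prop: H_1(M;Wh) integral computation} to each map $f_i$ and $h_i$, since the induced map on $H_2(-; K_{-1}(\Z[G]))$ respects the involution. To check that $\Phi$ is an isomorphism on the torsion-free part, I would invoke the universal coefficient theorem. Since $K_{-1}(\Z[G])$ is a finitely generated abelian group (Carter), any Tor contribution to $H_2(-; K_{-1}(\Z[G]))$ is finite, hence torsion, and
\[
H_2(Y; K_{-1}(\Z[G])) / \mathrm{torsion} \;\cong\; \bigl(H_2(Y; \Z)/\mathrm{torsion}\bigr) \otimes \bigl(K_{-1}(\Z[G])/\mathrm{torsion}\bigr)
\]
for $Y = T^2$ and $Y = X$. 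Under these identifications $\Phi$ becomes $(k_i)_i \mapsto \sum_i e_i \otimes k_i$, which is an isomorphism because $\{e_i\}$ is a basis of $H_2(X;\Z)/\mathrm{torsion}$.

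The main subtlety is conceptual rather than computational: the involution on $H_2(X; K_{-1}(\Z[G]))$ is defined not as the natural tensor-product involution but via the identification with the quotient $H_1(X; \op{Wh}_K(F))/H_1(X^{(1)}; \op{Wh}_K(F))$ in the skeletal filtration. One must therefore route the entire argument through naturality of that filtration in $X$ (the content of Proposition \ref{prop: H_1(M;Wh) integral computation}) rather than reasoning directly with the universal coefficient formula; once this is done, the rest is essentially bookkeeping with the fundamental classes of $\Sigma_{g_i}$, $T^2$, and the basis $\{e_i\}$.
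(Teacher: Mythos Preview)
Your proposal is correct and follows essentially the same route as the paper: represent generators of $H_2(X;\Z)$ by maps from closed oriented surfaces, pass to $T^2$ via a degree-one collapse map, and invoke the naturality of the involution from Proposition~\ref{prop: H_1(M;Wh) integral computation}. You supply more detail than the paper does (the universal-coefficient argument and Carter's finiteness to handle the torsion-free claim), but the architecture is the same.

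One small technical remark: your pinch map $h_i:\Sigma_{g_i}\to T^2$ requires $g_i\ge 1$, so when you invoke Thom's representability you should note that one may always raise the genus (e.g.\ by precomposing with a degree-one map $\Sigma_{g+1}\to\Sigma_g$) before choosing $h_i$.
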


\begin{remark}
In the statement of Proposition \ref{prop: H_2(X;K)}, we are implicitly using that $K_{-1}(\Z[G])$ is finitely generated for a finite group $G$ \cite{CarterLower}.
\end{remark}

We have now reduced the computation of the involution on $H_2(M;K_{-1}(\Z[G]))$ to the computation of the involution on $H_2(T^2;K_{-1}(\Z[G]))$ but this is just the involution on $K_{-1}(\Z[G])$.

We may now prove the following.
\begin{prop}\label{prop: involution on H_1(X;Wh)}
Suppose $G$ is a finite cyclic group of order at least $5$.
The involution on $H_1(X;\op{Wh}_{K}(F))_{(0)}$ has a $-1$-eigenspace.
It has a $1$-eigenspace if and only if $H_2(X;\Q)\neq0$ and there are distinct prime factors $p_i$ and $p_j$ of $\abs{G}$ such that $p_i$ has odd order in $(\Z/p_j)^\times$.
\end{prop}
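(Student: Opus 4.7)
The plan is to use the decomposition of Propositions~\ref{prop: H_1(M;Wh(G)) rational computation} and \ref{prop: H_1(M;Wh) integral computation},
\[
H_1(X;\op{Wh}_{K}(F))_{(0)}\cong H_0(X;\op{Wh}(G))_{(0)}\oplus H_2(X;K_{-1}(\Z[G]))_{(0)},
\]
which respects the involution, and to analyze each summand separately. Concretely, I would show that the first summand $\op{Wh}_1(G)\otimes\Q$ always contributes to the $-1$-eigenspace and never to the $+1$-eigenspace, and then reduce the study of the $+1$-eigenspace to the analysis of the involution on $K_{-1}(\Z[G])$ carried out in the appendix.

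For the first summand, decompose $\Q[G]\cong\prod_{d\mid\abs{G}}\Q(\zeta_d)$; then $\op{Wh}_1(G)\otimes\Q$ splits correspondingly, and each factor is spanned (after tensoring with $\Q$) by real cyclotomic units, which are fixed by complex conjugation. Since $\tau_1$ acts as minus complex conjugation on $K_1\otimes\Q$, these generators lie in the $-1$-eigenspace, and for $\abs{G}\ge5$ the $d=\abs{G}$ factor already has positive rank $\phi(\abs{G})/2-1$, giving the desired $-1$-eigenspace. For the $+1$-eigenspace triviality: if $\tau_1[u]=[u]$ in $\op{Wh}_1(G)\otimes\Q$, then $[u\bar{u}]=0$, so $u\bar{u}$ is a root of unity in $\Z[G]$; because $u\bar{u}$ is fixed by the involution and totally positive under every complex embedding, one forces $u\bar{u}=1$, after which Kronecker's theorem makes $u$ itself a root of unity.

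The $+1$-eigenspace must therefore come entirely from $H_2(X;K_{-1}(\Z[G]))_{(0)}$. By Proposition~\ref{prop: H_2(X;K)}, this group is identified as a group with involution (after tensoring with $\Q$) with $r=\op{rank}H_2(X;\Z)$ copies of $H_2(T^2;K_{-1}(\Z[G]))\otimes\Q\cong K_{-1}(\Z[G])\otimes\Q$, carrying the natural involution. Hence the $+1$-eigenspace is nontrivial iff both $H_2(X;\Q)\ne0$ and the involution on $K_{-1}(\Z[G])\otimes\Q$ has nonzero fixed part. I would then appeal to the appendix, which elaborates Madsen--Rothenberg's analysis, to identify the latter condition with the stated number-theoretic one. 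The main obstacle is exactly this appendix computation: one must decompose $K_{-1}(\Z[G])$ along divisors of $\abs{G}$ via Carter's rank formula, describe the involution in terms of the Frobenius action on the $p$-adic factorizations of the relevant cyclotomic polynomials, and pin down precisely when a nonzero symmetric vector survives rationally---this is where the condition on the order of $p_i$ in $(\Z/p_j)^\times$ enters. A routine but necessary subsidiary check is that Proposition~\ref{prop: H_2(X;K)} genuinely intertwines the involutions; this follows because the surface classes $\Sigma_g\to X$ and the degree-one map $\Sigma_g\to T^2$ used there come from continuous maps of spaces and hence induce maps of coefficient spectra that commute with the involution on $\op{Wh}_{K}(F)$.
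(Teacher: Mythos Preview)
Your approach matches the paper's: decompose via Propositions~\ref{prop: H_1(M;Wh(G)) rational computation} and~\ref{prop: H_1(M;Wh) integral computation}, show the $\op{Wh}_1(G)$ summand supplies the $-1$-eigenspace and contributes nothing to the $+1$-eigenspace, and then invoke Proposition~\ref{prop: H_2(X;K)} together with the appendix (Corollary~\ref{cor: K_-1 involution}) for the $K_{-1}$ contribution. The only difference is that where you sketch a direct cyclotomic-unit argument---your Kronecker step is in fact redundant once real cyclotomic units are known to span $\op{Wh}_1(G)\otimes\Q$---the paper simply cites Bak \cite{BakInvolution} for the fact that $\tau_1$ acts as $-1$ on $\op{Wh}_1(G)$.
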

\begin{proof}
By our assumption on the order of $G$, the Whitehead group is infinite.
By \cite{BakInvolution}, the involution on $\op{Wh}_1(G)$ is multiplication by $-1$.
So $H_0(X;\op{Wh}_1(G))_{(0)}$ is nontrivial and the involution is multiplication by $-1$.

The statement on $1$-eigenspaces follows from Proposition \ref{prop: H_2(X;K)} and Corollary \ref{cor: K_-1 involution}.
\end{proof}

Proposition \ref{prop: involution on H_1(X;Wh)} and Proposition \ref{prop: equiv homology suffices} prove Theorem \ref{thm: Main}.
\appendix
\section{The Involution on $K_{-1}(\Z[G])$}\label{section: negative K theory involution}

\subsection{Involutions on Spectra}
It is well-known that there are involutions on the $K$-theory spectra of group rings (and more generally of rings with involution).
Let $K(R[G])$ denote the connective $K$-theory spectrum of the group ring $R[G]$.
By regarding this as a space via Quillen's $+$-construction, an involution is given by the involution $\GL(R[G])\rightarrow\GL(R[G])$ sending a matrix to the inverse of its conjugate transpose.
Alternatively, one can also consider $K(R[G])$ as the $K$-theory of the symmetric monoidal category of finitely generated free $R$-modules.
Then, an involution is induced by the contravariant functor sending a module to its dual.

\begin{remark}
These define the same involution on connective $K$-theory but, on $K_1(R[G])$, it is the negative of the involution considered in \cite{MilnorWhitehead}.
\end{remark}

These involutions extend to involutions on non-connective $K$-theory spectra in the following sense.
Let $K^{-\infty}(R[G])$ denote the non-connective $K$-theory spectrum.
Then there is an involution on $K^{-\infty}(R[G])$ such that $K(R[G])\rightarrow K^{-\infty}(R[G])$ is a map of spectra with involution.

To be more explicit, one may consider, for instance, the Pedersen--Weibel model for $K^{-\infty}(R[G])$ \cite{PedersenWeibel}.
They consider additive categories $\mc{C}_{\R^n}(R[G])$ of finitely generated free $R[G]$-modules locally finitely indexed by points in $\R^n$.
Then, $K^{-\infty}(R[G])$ is defined to be an $\Omega$-spectrum with $n$-th space $K(\mc{C}_{\R^n}(R[G]))$.
One can define a contravariant functor on $\mc{C}_{\R^n}(R[G])$ which dualizes each module and preserves the coordinate in $\R^n$.
This makes $K^{-\infty}(R[G])$ into a spectrum with involution in the sense that it is an $\Omega$-spectrum whose spaces have involution and whose structure maps respect the involution.

\subsection{Dual Representations, $K_0$ and $K_1$}
If $x=\sum a_ig_i\in R[G]$, let $\overline{x}:=\sum a_ig_i^{-1}$.

\begin{definition}\label{def: tau_0 tau_1}
Let $P$ be a finitely generated projective $R[G]$-module.
Define the dual to be $P^*:=\Hom_{R[G]}(P,R[G])$ where, for $g\in G$, $x\in P$ and $f\in P^*$,
\[
(g\cdot f)(x)=f(x)\cdot g^{-1}.
\]
Define $\tau_0:K_0(R[G])\rightarrow K_0(R[G])$ by $[P]\mapsto[P^*]$.

Let $A=(a_{ij})$ be a matrix with coefficients in $R[G]$.
Define $A^*:=(\overline{a_{ji}})$ and $\tau_1:K_1(R[G])\rightarrow K_1(R[G])$ by $[A]\mapsto-\left[A^*\right]$.
\end{definition}

We note that $P^*$ is isomorphic as an $R[G]$-module to $\Hom_R(P,R)$ with the action defined by $(g\cdot \phii)(x)=\phii\left(g^{-1}\cdot x\right)$ for $\phii\in\Hom_R(P,R)$.
Indeed, if $f(x)=\sum_{g\in G}a_{g,x}g$, the map $\psi:P^*\rightarrow\Hom_R(P,R)$ sending $f$ to $\psi(f)(x)=a_{1,x}$ defines an isomorphism.

\begin{prop}\label{prop: K_0 in K_1 involution}
Let $\Phi:K_0(R[G])\rightarrow K_1(R[G\times\Z])$ be the homomorphism sending $[P]$ to $[te+(1-e)]$ where $t$ is a generator of $\Z$ and $e:R[G]^n\rightarrow R[G]^n$ is an idempotent matrix corresponding to the projective module $P$.
The following diagram is commutative.
\[
\begin{tikzpicture}[scale=2]
\node (A) at (0,1) {$K_0(R[G])$};\node (B) at (2,1) {$K_1(R[G\times\Z])$};
\node (C) at (0,0) {$K_0(R[G])$};\node (D) at (2,0) {$K_1(R[G\times\Z])$};
\path[->] (A) edge node[above]{$\Phi$} (B) (A) edge node[left]{$\tau_0$} (C) (B) edge node[left]{$\tau_1$} (D) (C) edge node[above]{$\Phi$} (D);
\end{tikzpicture}
\]
\end{prop}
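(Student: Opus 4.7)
The plan is a direct matrix computation, verifying commutativity on elements of the form $[P]$ for a finitely generated projective $R[G]$-module $P$, written via an idempotent $e \in M_n(R[G])$.

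First, I would spell out the effect of each map on a representative. Fix an idempotent $e = (e_{ij}) \in M_n(R[G])$ with image $P$. Then $\Phi([P]) = [te + (1-e)]$ in $K_1(R[G\times\Z])$. Applying $\tau_1$ gives $-[(te + (1-e))^*]$, where $(\cdot)^*$ denotes conjugate transpose. Since $\overline{t} = t^{-1}$ in $R[G\times\Z]$ (as $t$ is a group element of the free abelian factor $\Z$) and conjugate transpose is an antihomomorphism of rings, one has
\[
(te + (1-e))^* = t^{-1} e^* + (1 - e^*),
\]
where $e^* = (\overline{e_{ji}})$ is again an idempotent in $M_n(R[G])$.

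Next, I would identify the projective module represented by $e^*$. The matrix $e^*$ is the matrix of the dual endomorphism of $(R[G]^n)^*$ under the standard identification $(R[G]^n)^* \cong R[G]^n$ sending a functional $\varphi$ to the column vector $(\varphi(\mathbf{e}_1), \dots, \varphi(\mathbf{e}_n))^T$; here the remark after Definition \ref{def: tau_0 tau_1} (identifying $P^*$ with $\Hom_R(P,R)$) makes the identification compatible with the left $R[G]$-action. Consequently $\text{im}(e^*) \cong P^*$, so $\Phi([P^*]) = [te^* + (1-e^*)]$, which is $\Phi(\tau_0([P]))$.

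The remaining step is a short $K_1$ identity: compute
\[
(te^* + (1-e^*))(t^{-1} e^* + (1-e^*)) = e^* + (1-e^*) = 1,
\]
using $e^* \cdot (1-e^*) = 0 = (1-e^*) \cdot e^*$ and $t \cdot t^{-1} = 1$. This shows $t^{-1}e^* + (1-e^*)$ is the multiplicative inverse of $te^* + (1-e^*)$ in $\GL(R[G\times\Z])$, so $[t^{-1}e^* + (1-e^*)] = -\Phi([P^*])$ in $K_1$. Combining,
\[
\tau_1(\Phi([P])) = -[t^{-1} e^* + (1-e^*)] = \Phi([P^*]) = \Phi(\tau_0([P])),
\]
which is the desired commutativity.

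The only subtle point — and really the only thing that requires care — is the bookkeeping of right-versus-left module structures in step two, i.e.\ checking that the conjugate-transpose matrix $e^*$ actually represents $P^*$ rather than some twist. Everything else is formal manipulation of idempotents and the identity $\bar{t} = t^{-1}$, together with the sign convention built into the definition of $\tau_1$.
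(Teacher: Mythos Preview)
Your argument is correct and follows exactly the same approach as the paper: identify $e^*$ as the idempotent for $P^*$, then compare $\Phi\circ\tau_0([P])=[te^*+(1-e^*)]$ with $\tau_1\circ\Phi([P])=-[t^{-1}e^*+(1-e^*)]$. You actually supply the one step the paper leaves implicit, namely the verification that $(te^*+(1-e^*))(t^{-1}e^*+(1-e^*))=1$ so that these two $K_1$-classes agree.
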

\begin{proof}
The idempotent corresponding to $P^*$ is $e^*$ so
\[
\Phi\circ\tau_0([P])=\Phi\left(\left[P^*\right]\right)=\left[te^*+\left(1-e^*\right)\right].
\]
On the other hand,
\[
\tau_1\circ\Phi([P])=-\left[t^{-1}e^*+\left(1-e^*\right)\right]
\]
so $\Phi\circ\tau_0([P])=\tau_1\circ\Phi([P])$.
\end{proof}

\subsection{$K_{-1}$ and Localization Sequences}
In order to compute negative $K$-groups of group rings, localization sequences are very useful.
These sequences are obtained from a homotopy cartesian diagram of nonconnective $K$-theory spectra (see, for instance, \cite[V.7]{KBook}).
In our case, the maps of spectra are induced by maps of coefficient rings of group rings.
So, the maps in the sequences below will respect the involution.

\subsubsection{Carter's Sequence}
\begin{definition}\label{def: H_S}
Let $S$ be a central multiplicative subset of a ring $A$.
Define the category $\mathbf{H}_S(A)$ to be the $S$-torsion $A$ modules $M$ which have a finite length resolution of finitely generated projective $A$-modules.
\end{definition}

Let $S\subseteq\Z$ be a multiplicative subset generated by a set of primes and let $\langle p\rangle$ denote the multiplicative subset generated by $p$.
There is an equivalence of categories
\[
\mathbf{H}_S(\Z[G])\simeq\prod_{p\in S}\mathbf{H}_{\langle p\rangle}\left(\Z_p[G]\right)
\]
when $G$ is noetherian group.
This equivalence is given by sending an $S$-torsion $\Z[G]$-module to its $p$-primary parts.

Recall that, for a ring $A$, $K_{-1}(A)$ is defined to be the cokernel of $K_0(A[t])\oplus K_0\left(A\left[t^{-1}\right]\right)\rightarrow K_0\left(A\left[t,t^{-1}\right]\right)$.
Moreover, the map $K_0\left(A\left[t,t^{-1}\right]\right)\rightarrow K_{-1}(A)$ naturally splits so we may regard $K_{-1}(A)$ as a subgroup of $K_0\left(A\left[t,t^{-1}\right]\right)$.
Carter \cite{CarterLocalization} provides a resolution of free abelian groups computing $K_{-1}\left(\Z[G]\right)$ when $G$ is finite of order $n$.
\[
0\rightarrow K_0(\Z)\rightarrow K_0(\Q[G])\oplus\bigoplus_{p|n}K_0\left(\Z_p[G]\right)\rightarrow\bigoplus_{p|n}K_0\left(\Q_p[G]\right)\xrightarrow{\partial} K_{-1}(\Z[G])\rightarrow0
\]
The map $K_0\left(\Q_p[G]\right)\rightarrow K_{-1}(\Z[G])$ is defined using a connecting homomorphism $\partial:K_1\left(\Q_p[G\times\Z]\right)\rightarrow K_0(\Z[G\times\Z])$.

This connecting homomorphism $\partial$ is defined to be a composite
\[
K_1\left(\Q_p[G\times\Z]\right)\rightarrow K_0\mathbf{H}_{\langle p\rangle}\left(\Z_p[G\times\Z]\right)\rightarrow K_0\mathbf{H}_{\langle p\rangle}\left(\Z[G\times\Z]\right)\rightarrow K_0(\Z[G]).
\]
Suppose $A\in\GL_n(\Q_p[G\times\Z])$ is a matrix representing an element of $K_1\left(\Q_p[G\times\Z]\right)$.
There is an $r\ge0$ such that $p^r A$ has coefficients in $\Z_p[G\times\Z]$.
The first map sends $A$ to $\left[\op{coker}\left(p^r A\right)\right]-\left[\op{coker}\left(p^rI_n\right)\right]$.
The second map sends a $p$-primary group regarded as a module over $\Z_p[G\times\Z]$ to the same group regarded as a module over $\Z[G\times\Z]$.
The third map sends an $S$-torsion module with a finite length resolution to the Euler characteristic of the resolution.

Note that
\[
\Z_p[G\times\Z]^n\xrightarrow{p^r A}\Z_p[G\times\Z]^n\rightarrow\op{coker}\left(p^r A\right)
\]
is a projective resolution of $\Z_p[G\times\Z]$-modules.
The argument in the proof of \cite[Lemma 2.3]{CarterLocalization} shows there is a projective resolution of $\Z[G\times\Z]$-modules
\[
F\rightarrow\Z[G\times\Z]^m\rightarrow\op{coker}\left(p^r A\right).
\]
One can similarly describe the $\op{coker}\left(p^rI_n\right)$ term and conclude that
\[
\partial[A]=\left[\Z[G\times\Z]^m\right]-[F].
\]

One can give $K_{-1}(\Z[G])$ and involution by restricting the involution on $K_0(\Z[G\times\Z])$.
The following result shows that the Carter sequence respects this involution.

\begin{prop}\label{prop: connecting map involution}
The following diagrams commute.
\[
\begin{tikzpicture}[scale=2]
\node (A) at (0,1) {$K_1\left(\Q_p[G\times\Z]\right)$};\node (B) at (2,1) {$K_0\left(\Z[G\times\Z]\right)$};
\node (C) at (0,0) {$K_1\left(\Q_p[G\times\Z]\right)$};\node (D) at (2,0) {$K_0\left(\Z[G\times\Z]\right)$};
\path[->] (A) edge node[above]{$\partial$} (B) (A) edge node[left]{$\tau_1$} (C) (B) edge node[left]{$\tau_0$} (D) (C) edge node[above]{$\partial$} (D);
\end{tikzpicture}
\hspace{1cm}
\begin{tikzpicture}[scale=2]
\node (A) at (0,1) {$K_0\left(\Q_p[G]\right)$};\node (B) at (2,1) {$K_{-1}\left(\Z[G]\right)$};
\node (C) at (0,0) {$K_0\left(\Q_p[G]\right)$};\node (D) at (2,0) {$K_{-1}\left(\Z[G]\right)$};
\path[->] (A) edge node[above]{$\partial$} (B) (A) edge node[left]{$\tau_0$} (C) (B) edge node[left]{$\tau_{-1}$} (D) (C) edge node[above]{$\partial$} (D);
\end{tikzpicture}
\]
\end{prop}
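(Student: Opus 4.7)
My plan is to factor $\partial$ as indicated in the discussion preceding the proposition and verify that each constituent map respects the involution. For the first diagram, $\partial$ is the composite of (i) the boundary $\delta\colon K_1(\Q_p[G\times\Z]) \to K_0\mathbf{H}_{\langle p\rangle}(\Z_p[G\times\Z])$ of the localization cofiber sequence, (ii) the change-of-rings map $K_0\mathbf{H}_{\langle p\rangle}(\Z_p[G\times\Z]) \to K_0\mathbf{H}_{\langle p\rangle}(\Z[G\times\Z])$, and (iii) the Euler characteristic $\chi\colon K_0\mathbf{H}_{\langle p\rangle}(\Z[G\times\Z]) \to K_0(\Z[G\times\Z])$.

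The boundary $\delta$ arises as the connecting map in the cofiber sequence of non-connective $K$-theory spectra
\[
K(\mathbf{H}_{\langle p\rangle}(\Z_p[G\times\Z])) \to K^{-\infty}(\Z_p[G\times\Z]) \to K^{-\infty}(\Q_p[G\times\Z]),
\]
where the second arrow is induced by $\Z_p[G\times\Z]\hookrightarrow\Q_p[G\times\Z]$. As explained in the opening subsection of this appendix, non-connective $K$-theory (in e.g.\ the Pedersen--Weibel model) is functorial for rings with involution and carries involution-respecting ring maps to maps of spectra with involution; hence $\delta$ respects the involutions. The change-of-rings map in (ii) is induced by the inclusion $\Z[G\times\Z]\hookrightarrow\Z_p[G\times\Z]$, which respects the involution $g\mapsto g^{-1}$ on both rings, and hence respects the dualization involution on $K$-theory. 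Finally, the Euler characteristic $\chi$ sends a module $M$ with finite projective resolution $P_\bullet \to M$ to $\sum_i (-1)^i [P_i] \in K_0(\Z[G\times\Z])$; since the dual $P_\bullet^*$ is again a bounded complex of projectives, $\chi$ satisfies $\chi\circ\tau_0=\tau_0\circ\chi$. Composing the three involution-equivariant maps gives the first diagram.

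For the second diagram, the map $\partial\colon K_0(\Q_p[G])\to K_{-1}(\Z[G])$ factors as
\[
K_0(\Q_p[G])\xrightarrow{\Phi} K_1(\Q_p[G\times\Z])\xrightarrow{\partial} K_0(\Z[G\times\Z])\twoheadrightarrow K_{-1}(\Z[G]),
\]
where the last map is the projection onto the $K_{-1}$ summand in the Bass--Heller--Swan decomposition. The first map respects involutions by Proposition \ref{prop: K_0 in K_1 involution}, the second by the first diagram just proven, and the last by naturality of the Bass--Heller--Swan splitting in the ring with involution; equivalently, the involution on $K_{-1}(\Z[G])$ is by construction the one it inherits as a summand of $K_0(\Z[G\times\Z])$, so the projection tautologically respects it.

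The main obstacle will be ensuring that the spectrum-level involution on $K(\mathbf{H}_{\langle p\rangle}(\Z_p[G\times\Z]))$ induces, on $\pi_0$, precisely the involution compatible with the $\tau_0$ and $\tau_1$ of Definition \ref{def: tau_0 tau_1}; in particular the sign in $\tau_1[A]=-[A^*]$ must correspond to the degree shift introduced by the connecting map. This can be verified directly from the explicit formula $\partial[A] = [\Z[G\times\Z]^m] - [F]$ by noting that a resolution of $\op{coker}((p^r A)^*)\cong\Ext^1(\op{coker}(p^rA),\Z_p[G\times\Z])$ is obtained by dualizing a resolution of $\op{coker}(p^rA)$ and shifting by one degree.
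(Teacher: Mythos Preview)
Your high-level factorization is a reasonable way to think about the problem, but the paper proceeds quite differently: it simply takes a representative $[A]$, writes down the explicit formula $\partial[A]=[\Z[G\times\Z]^m]-[F]$ coming from a chosen $\Z[G\times\Z]$-resolution $0\to F\to\Z[G\times\Z]^m\to M\to 0$ of $M=\op{coker}(p^rA)$, and then computes both $\tau_0\circ\partial[A]$ and $\partial\circ\tau_1[A]$ directly. No involution is ever introduced on $K_0\mathbf{H}_{\langle p\rangle}$, and no spectrum-level equivariance is invoked.

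Your approach has a genuine gap precisely at step (ii). You assert that the restriction-of-scalars map $K_0\mathbf{H}_{\langle p\rangle}(\Z_p[G\times\Z])\to K_0\mathbf{H}_{\langle p\rangle}(\Z[G\times\Z])$ respects the involution because the ring inclusion does; but the involution on $K_0\mathbf{H}_{\langle p\rangle}$ (which you never define) must be something like $[M]\mapsto[\Ext^1(M,-)]$, and the statement that restriction intertwines the two $\Ext^1$'s is exactly the nontrivial identification $\Ext^1_{\Z_p}(M,\Z_p)\cong\Ext^1_{\Z}(M,\Z)$ for $p$-primary $M$. This is the heart of the paper's argument, established via the injective resolutions $0\to\Z\to\Q\to\Q/\Z\to 0$ and $0\to\Z_p\to\Q_p\to\Z[1/p]/\Z\to 0$. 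Your final paragraph gestures toward the right computation but still conflates the $\Z_p[G\times\Z]$-resolution (obtained by dualizing $p^rA$) with the $\Z[G\times\Z]$-resolution (obtained by dualizing $F\to\Z[G\times\Z]^m$); identifying their cokernels is again this $\Ext$ comparison. There is also a sign to track: with the natural duality on $\mathbf{H}_{\langle p\rangle}$ one finds $\chi(\Ext^1(M,-))=-\tau_0(\chi(M))$, so the minus sign in $\tau_1$ is absorbed by the Euler characteristic in step (iii), not by the connecting map in step (i) as you suggest.

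In short: either supply the missing definition of the involution on $K_0\mathbf{H}_{\langle p\rangle}$ and prove the $\Ext$ comparison to make your factorized argument go through, or---more economically---drop the factorization and carry out the direct computation you sketch at the end, being careful to dualize the $\Z[G\times\Z]$-resolution and to justify $\Ext^1_{\Z_p}(M,\Z_p)\cong\Ext^1_{\Z}(M,\Z)$.
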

\begin{proof}
The second diagram follows from the first and Proposition \ref{prop: K_0 in K_1 involution}.

We show that the first diagram commutes.
Let $[A]\in K_1\left(\Q_p[G\times\Z]\right)$ and define $M:=\op{coker}\left(p^r A\right)$
Let
\begin{equation}\label{eq: Z res for M}
0\rightarrow F\rightarrow\Z[G\times\Z]^m\rightarrow M\rightarrow0
\end{equation}
be as above.
It follows immediately that
\[
\tau_0\circ\partial [A]=\left[\Z[G\times\Z]^m\right]-\left[F^*\right].
\]

Instead of evaluating $\partial\circ\tau_1[A]$, it will be slightly easier to evaluate $\partial\circ(-\tau_1)[A]$.
There is an exact sequence
\[
0\rightarrow\Hom_{\Z_p}\left(M,\Z_p\right)\rightarrow\Z_p[G\times\Z]^n\xrightarrow{A^*}\Z_p[G\times\Z]^n\rightarrow\Ext^1_{\Z_p}\left(M,\Z_p\right)\rightarrow0.
\]
The term $\Hom_{\Z_p}\left(M,\Z_p\right)$ vanishes since $M$ is torsion.
So to compute $\partial\circ(-\tau_1)[A]$ we need a projective $\Z[G\times\Z]$-resolution of $\Ext^1_{\Z_p}(M,\Z_p)$.

Dualizing (\ref{eq: Z res for M}) above gives a projective $\Z[G\times\Z]$-resolution
\[
0\rightarrow\Z[G\times\Z]^m\rightarrow F^*\rightarrow\Ext^1_{\Z}(M,\Z)\rightarrow0
\]
Since $\Ext^1_{\Z_p}\left(M,\Z_p\right)\cong\Ext^1_{\Z}\left(M,\Z_p\right)$ it suffices to show that $\Ext^1_{\Z}\left(M,\Z_p\right)\cong\Ext^1_{\Z}(M,\Z)$.
This isomorphism follows by considering the injective resolutions
\begin{align*}
0\rightarrow\Z\rightarrow&\Q\rightarrow\Q/\Z\rightarrow0\\
0\rightarrow\Z_p\rightarrow&\Q_p\rightarrow\Z\left[\frac{1}{p}\right]/\Z\rightarrow0
\end{align*}
and recalling that $M$ is $p$-primary.
\end{proof}

\subsubsection{The Madsen-Rothenberg Sequence}
In \cite{MadsenRothenbergEquivariantAut}, Madsen and Rothenberg regard the functor $K\left(R[-]\right)$ as a Mackey functor.
It follows that $K_n\left(R[G]\right)$ has an action of the Burnside ring $A(G)$.
Let $q(G,0)\subseteq A(G)$ denote the ideal generated by the virtual finite $G$-sets whose $G$-fixed point set has order $0$.
If $\mc{M}$ is a Mackey functor, then localization at this ideal can be described as follows.
\begin{equation}\label{eq: localize burnside}
\mc{M}\left(G/G\right)_{q(G,0)}=\ker\left(\mc{M}(G/G)_{(0)}\rightarrow\bigoplus_{(H)}\mc{M}(G/H)_{(0)}\right)
\end{equation}
Here, the $H$ on the right hand side varies over conjugacy classes of proper subgroups of $G$.
Heuristically, this localization is isolating the part of $\mc{M}(G/G)_{(0)}$ which does not come from a proper subgroup.

Let $G=\Z/m\Z$ be finite cyclic.
For a subgroup $H$, the composite
\[
\mc{M}(G/H)_{(0)}\rightarrow\mc{M}(G/G)_{(0)}\rightarrow\mc{M}(G/H)_{(0)}
\]
is multiplication by the index so it is a vector space isomorphism.

Madsen--Rothenberg claim that localizing the Carter sequence at $q(G,0)$ gives the following short exact sequence.
\[
0\rightarrow K_0\left(\Q\left(\zeta_m\right)\right)_{(0)}\rightarrow \bigoplus_{p|m}K_0\left(\Q_p\otimes_{\Q}\Q\left(\zeta_m\right)\right)_{(0)}\rightarrow K_{-1}\left(\Z\left[G\right]\right)_{q(0,2)}\rightarrow0
\]
Indeed, writing $\Q[G]$ as a product of cyclotomic fields, we see that only the summand $K_0\left(\Q_p\otimes\Q\left(\zeta_m\right)\right)_{(0)}$ is in the kernel above.
Additionally, if we write $m=p^r m_p$ where $p$ does not divide $m_p$ then
\begin{align*}
K_0\left(\Z_p[G]\right)&\cong K_0\left(\Z_p\left[\Z/p^r\Z\right]\left[\Z/m_p\Z\right]\right)\cong K_0\left(\F_p\left[\Z/p^r\Z\right]\left[\Z/m_p\Z\right]\right)\\
&\cong K_0\left(\F_p[x]\left[\Z/m_p\Z\right]/\left(x^{p^r}-1\right)\right)\cong K_0\left(\F_p\left[\Z/m_p\Z\right]\right)\cong K_0\left(\Z_p\left[\Z/m_p\Z\right]\right).
\end{align*}
The second and last isomorphisms follow from the fact that $(p)$ is a complete ideal in $\Z_p$.
The fourth isomorphism follows from the fact that the ideal $(x-1)$ is nilpotent.
Therefore, $K_0\left(\Z_p\left[G\right]\right)_{q(G,0)}=0$.

The action on the middle term is more complicated.
We will need the following lemma.
\begin{lemma}\label{lem: product of fields}
Suppose $K/\Q$ is a finite Galois extension.
Then $\Q_p\otimes_{\Q}K$ is a product of isomorphic fields.
\end{lemma}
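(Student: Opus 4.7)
The plan is to decompose $\Q_p \otimes_\Q K$ as a product of finite field extensions of $\Q_p$ via the primitive element theorem and the Chinese Remainder Theorem, then use transitivity of the Galois action on primes above $p$ to conclude the factors are mutually isomorphic.

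For the decomposition, I would pick $\alpha \in K$ with $K = \Q(\alpha)$ and let $f(x) \in \Q[x]$ be its minimal polynomial. Since $K/\Q$ is Galois in characteristic zero, $f$ is separable, so its factorization $f = f_1 \cdots f_r$ into monic irreducibles over $\Q_p$ has pairwise distinct (hence coprime) factors. By the Chinese Remainder Theorem,
\[
\Q_p \otimes_\Q K \;\cong\; \Q_p[x]/(f(x)) \;\cong\; \prod_{i=1}^r L_i, \qquad L_i := \Q_p[x]/(f_i(x)),
\]
with each $L_i$ a finite field extension of $\Q_p$.

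To see the $L_i$ are mutually isomorphic as $\Q_p$-algebras, note that $G := \op{Gal}(K/\Q)$ acts on $\Q_p \otimes_\Q K$ through the second tensor factor, and this action is by $\Q_p$-algebra automorphisms since elements of $G$ fix $\Q$. It permutes the maximal ideals, hence the factors $L_i$, and any $\sigma \in G$ sending the $i$-th factor to the $j$-th factor restricts to an isomorphism $L_i \cong L_j$ of $\Q_p$-algebras. The main step, which I expect to be the crux, is showing this $G$-action on $\{L_1,\ldots,L_r\}$ is transitive. I would do this by identifying the factors with the completions $K_{\mathfrak{p}}$ at primes $\mathfrak{p}$ of $\mc{O}_K$ lying above $p$, under which the $G$-action becomes the standard Galois action on the primes above $p$; transitivity is then a classical fact from algebraic number theory. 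Equivalently, one may observe that the $L_i$ correspond to $\op{Gal}(\overline{\Q_p}/\Q_p)$-orbits on the roots of $f$ in $\overline{\Q_p}$, and that the two Galois actions on the root set commute (one by pre-, the other by post-composition with embeddings $K \hookrightarrow \overline{\Q_p}$), so $G$ permutes the orbits transitively because it is already transitive on individual roots.
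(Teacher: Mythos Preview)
Your argument is correct and begins exactly as the paper does: write $K=\Q[x]/(f)$, factor $f$ over $\Q_p$, and use CRT to decompose $\Q_p\otimes_\Q K$ into the fields $L_i=\Q_p[x]/(f_i)$. The difference is in how you pass from this to ``the $L_i$ are mutually isomorphic.'' You invoke the classical transitivity of $\Gal(K/\Q)$ on the primes above $p$ (or the equivalent orbit argument via embeddings into $\overline{\Q_p}$), which is certainly correct but imports a nontrivial result from algebraic number theory. The paper instead stays elementary: given any $a,b$, pick $\sigma\in\Gal(K/\Q)$ sending a root of $f_a$ to a root of $f_b$ (immediate from transitivity on roots of $f$), observe that $\sigma$ induces a ring automorphism of $\Q_p\otimes_\Q K$, and then check that the composite
\[
L_a\hookrightarrow\textstyle\prod_i L_i\xrightarrow{\;\sigma\;}\prod_i L_i\twoheadrightarrow L_b
\]
is a nonzero field map, hence injective; doing the same with $\sigma^{-1}$ and comparing $\Q_p$-dimensions gives $L_a\cong L_b$. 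This avoids appealing to prime decomposition in $\mc{O}_K$ altogether. Your route is cleaner if one is willing to cite the standard fact; the paper's is more self-contained.
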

\begin{proof}
We may write $K=\Q[x]/f(x)$ and $\Q_p\otimes_{\Q}K=\Q_p[x]/f(x)=\Q_p[x]/f_1(x)\cdots f_s(x)$ where $f(x)=f_1(x)\cdots f_s(x)$ is a factorization into irreducible polynomials in $\Q_p$.
So
\[
\Q_p\otimes_{\Q}K\cong\prod_{i=1}^s\Q_p[x]/f_i(x)
\]
where each $\Q_p[x]/f_i(x)$ is a field.
The Galois group of $K/\Q$ acts transitively on the roots of $f$ so there is an automorphism $\sigma$ sending a root of $f_a(x)$ to a root of $f_b(x)$.
This induces a ring automomorphism of $\Q_p\otimes_{\Q}K$.

Consider the composite
\[
\Q_p[x]/f_a(x)\rightarrow\prod_{i=1}^s\Q_p[x]/f_i(x)\xrightarrow{\sigma}\prod_{i=1}^s\Q_p[x]/f_i(x)\rightarrow\Q_p[x]/f_b(x).
\]
The first map sends an element $g(x)$ to the element which is $g(x)$ in the coordinate indexed my $a$ and $0$ elsewhere.
This is a non-unital ring homomorphism.
The composite is a nonzero field homomorphism so it is injective.
Similarly, $\sigma^{-1}$ gives a nonzero field homomorphism going the other way.
Since these are finite dimensional $\Q_p$-vector spaces, we see that $\Q_p[x]/f_a(x)\cong\Q_p[x]/f_b(x)$.
\end{proof}

In our case, we are interested in $K=\Q(\zeta)$.

\begin{prop}\label{prop: product of cyclotomic fields}
Let $\zeta$ be an $m$-th root of unity and let $p$ be a prime divisor of $m$.
Write $m=p^rm_p$ where $p$ does not divide $m_p$.
There is an isomorphism $\Q_p\otimes_{\Q}\Q(\zeta)\cong\prod_{i=1}^s\Q_p(\zeta)$ where $s$ is the index of $p$ in $(\Z/m_p)^\times$.
\end{prop}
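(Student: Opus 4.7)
The plan is to apply Lemma \ref{lem: product of fields} to reduce to computing two numerical invariants: the degree $[\Q_p(\zeta):\Q_p]$ of each factor and the total $\Q_p$-dimension of $\Q_p\otimes_\Q\Q(\zeta)$. Since $\Q(\zeta)/\Q$ is Galois (being cyclotomic), Lemma \ref{lem: product of fields} gives
\[
\Q_p\otimes_\Q\Q(\zeta)\cong\prod_{i=1}^sK
\]
for some field $K$, and the image of $1\otimes\zeta$ in each factor is still a primitive $m$-th root of unity, so $K\cong\Q_p(\zeta)$. Counting $\Q_p$-dimensions yields $s\cdot[\Q_p(\zeta):\Q_p]=[\Q(\zeta):\Q]=\phi(m)=\phi(p^r)\phi(m_p)$, so the content of the proposition reduces to identifying $[\Q_p(\zeta):\Q_p]$.

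Next I would write $\zeta=\zeta_{p^r}\cdot\zeta_{m_p}$ where $\zeta_{p^r}$ is a primitive $p^r$-th root of unity and $\zeta_{m_p}$ is a primitive $m_p$-th root of unity (both sitting inside $\Q(\zeta)$), so that $\Q_p(\zeta)=\Q_p(\zeta_{p^r},\zeta_{m_p})$. The two standard facts from local field theory I need are: (i) $\Q_p(\zeta_{p^r})/\Q_p$ is totally ramified of degree $\phi(p^r)=p^{r-1}(p-1)$, and (ii) since $\gcd(p,m_p)=1$, the extension $\Q_p(\zeta_{m_p})/\Q_p$ is the unramified extension of degree equal to the multiplicative order $f$ of $p$ modulo $m_p$. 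Because one extension is totally ramified and the other unramified, they are linearly disjoint over $\Q_p$, giving
\[
[\Q_p(\zeta):\Q_p]=\phi(p^r)\cdot f.
\]

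Combining with the dimension count from the first paragraph yields $s=\phi(m_p)/f$. Since $f$ is by definition the order of $p$ in $(\Z/m_p)^\times$ and $\phi(m_p)=|(\Z/m_p)^\times|$, this quotient is precisely the index of the cyclic subgroup generated by $p$ in $(\Z/m_p)^\times$, as claimed.

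The main obstacle is really only notational: invoking the correct local-field inputs (the totally ramified/unramified splitting of cyclotomic extensions of $\Q_p$, and in particular the identification of the residue degree with the order of $p$ modulo $m_p$) and observing their linear disjointness. Once these are in place, the proposition is a short dimension count, and Lemma \ref{lem: product of fields} supplies the product structure for free.
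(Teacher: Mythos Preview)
Your proposal is correct and follows essentially the same approach as the paper: invoke Lemma \ref{lem: product of fields} for the product structure, compute $[\Q_p(\zeta):\Q_p]=\phi(p^r)\cdot f$ (the paper simply cites \cite[IV.4]{SerreLocalFields} for this, which is exactly the ramified/unramified decomposition you spell out), and then divide into $[\Q(\zeta):\Q]$. Your extra sentence identifying each factor $K$ with $\Q_p(\zeta)$ via the image of $1\otimes\zeta$ is a helpful detail that the paper leaves implicit.
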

\begin{proof}
Let $t$ denote the order of $p$ in $(\Z/m_p)^\times$.
The degree of the extension $\Q_p(\zeta)/\Q_p$ is $t(p-1)p^{r-1}$ (see \cite[IV.4]{SerreLocalFields}) and the degree of the extension $\Q(\zeta)$ is $\abs{(\Z/m_p)^\times}(p-1)p^{r-1}$.
The result follows from Lemma \ref{lem: product of fields}.
\end{proof}

\subsubsection{Involutions on $K_0\left(\Q_p[G]\right)$}
An analysis of the involution on $K_0\left(\Q_p[G]\right)$ follows easily from \cite[12.4]{SerreRep}.
Let $K$ be a field of characteristic $0$ and $G$ a finite group with order $m$.
Define $L:=K\left(\zeta_m\right)$ where $\zeta_m$ is a primitive $m$-th root of unity then $\Gal(L/K)\subseteq\left(\Z/m\Z\right)^\times$.
Let $\Gamma_K$ denote the image of the Galois group in $\left(\Z/m\Z\right)^\times$.
Two elements $s$ and $s'$ of $G$ are $\Gamma_K$ conjugate if there is a $t\in\Gamma_k$ such that $s^t$ and $s'$ are conjugate in $G$.
The following is \cite[12.4 Corollary 1]{SerreRep}.
\begin{cor}\label{cor: Serre Rep}
A class function $f:G\rightarrow K$ belongs to $K\otimes_{\Z} R_K(G)$ if and only if it is constant on $\Gamma_K$-classes of $G$.
\end{cor}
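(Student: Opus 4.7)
The plan is to prove this statement by Galois descent from the splitting field $L = K(\zeta_m)$, where $m = |G|$. Since $L$ contains all $m$-th roots of unity, it is a splitting field for $G$, and the character map identifies $L \otimes_{\Z} R_L(G)$ with the $L$-algebra of all $L$-valued class functions on $G$; in particular, its $L$-dimension equals the number of conjugacy classes of $G$.

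For the forward direction, given a $K[G]$-module with character $\chi$ and any element $t \in \Gamma_K \subseteq (\Z/m\Z)^\times$, the eigenvalues of the action of $s$ are $m$-th roots of unity, permuted by $\lambda \mapsto \lambda^t$. Because $\chi(s)$ lies in $K$, it is fixed by $\Gal(L/K)$ and therefore invariant under this permutation, yielding $\chi(s^t) = \chi(s)$. Extending linearly, every element of $K \otimes_{\Z} R_K(G)$ is constant on $\Gamma_K$-classes, giving an injection from $K \otimes_{\Z} R_K(G)$ into the $K$-vector space of $\Gamma_K$-invariant $K$-valued class functions.

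For the reverse direction, I would establish equality by a dimension count. The $K$-vector space of $K$-valued class functions constant on $\Gamma_K$-classes has $K$-dimension equal to the number of $\Gamma_K$-classes of $G$. On the other hand, the Brauer--Berman--Witt theorem asserts that the number of isomorphism classes of simple $K[G]$-modules also equals the number of $\Gamma_K$-classes, so $\dim_K K \otimes_{\Z} R_K(G)$ matches this count. Combined with injectivity from the previous paragraph, this forces the map to be an isomorphism, which is the desired statement.

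The main obstacle is the Brauer--Berman--Witt equality itself. This would be handled by passing to the splitting field $L$, where simple $L[G]$-modules correspond bijectively to conjugacy classes of $G$ via characters; the Galois group $\Gal(L/K)$ then acts compatibly on both sides, and simple $K[G]$-modules correspond to $\Gamma_K$-orbits of simple $L[G]$-modules, the key point being that Schur-index phenomena affect multiplicities but not the number of orbits. Once this counting input is available, the dimension argument above finishes the proof without further subtleties.
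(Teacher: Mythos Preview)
The paper does not supply its own proof of this statement; it merely quotes it as \cite[12.4, Corollary~1]{SerreRep}. Your sketch is correct and is essentially Serre's own argument there: the forward direction is exactly the observation that the character of a $K[G]$-module takes values in $K$ and hence is fixed by $\Gal(L/K)$, and the converse in Serre is likewise a dimension count, the equality you call Brauer--Berman--Witt being the content of the proposition immediately preceding the corollary in \S12.4.
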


\begin{lemma}\label{lem: rep ring free or trivial}
Let $G$ be an odd order abelian group.
Then $\Z[\Z/2]$-module $R_K(G)/\langle\op{triv}\rangle$ is either free or a free abelian group with a trivial involution.
In the first case, the set of nontrivial irreducible $G$-representations over $K$ form a free $\Z/2$-set.
\end{lemma}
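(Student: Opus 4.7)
The plan is to translate the lemma into a statement about orbits via \refcor{cor: Serre Rep}. For abelian $G$, irreducible $K$-representations correspond bijectively to $\Gamma_K$-orbits on the character group $\hat G$, where $\Gamma_K\subseteq(\Z/m)^\times$ is the image of $\op{Gal}(K(\zeta_m)/K)$ and $m=|G|$. Under this bijection, the dualization involution on $R_K(G)$ is realized by the orbit involution $O\mapsto O^{-1}:=\{\chi^{-1}:\chi\in O\}$. Thus $R_K(G)/\langle\op{triv}\rangle$ is the permutation $\Z[\Z/2]$-module on non-trivial orbits under inversion, and the lemma amounts to showing that either every non-trivial orbit is self-inverse, or no non-trivial orbit is.

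The natural case split is on whether $-1\in\Gamma_K$. If $-1\in\Gamma_K$, then $\chi^{-1}$ lies in the orbit of $\chi$ for every $\chi$, so every orbit is self-inverse and the involution on $R_K(G)/\langle\op{triv}\rangle$ is trivial, giving the second alternative. If $-1\notin\Gamma_K$, I would prove that no non-trivial orbit is self-inverse; the orbits then pair off into free two-element $\Z/2$-orbits, yielding the first alternative and exhibiting the non-trivial irreducibles as a free $\Z/2$-set.

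For this second case, suppose a non-trivial $\chi$ satisfies $\chi^{-1}=\chi^t$ for some $t\in\Gamma_K$; then $\op{ord}(\chi)\mid t+1$, equivalently $t\equiv-1\pmod{\op{ord}(\chi)}$. When $G$ is cyclic of odd prime power order $p^r$, the group $(\Z/p^r)^\times$ is cyclic of even order with $-1$ as its unique order-two element, so $-1\notin\Gamma_K$ is equivalent to $|\Gamma_K|$ being odd. The image of $\Gamma_K$ in $(\Z/\op{ord}(\chi))^\times=(\Z/p^s)^\times$ is then also of odd order but would have to contain the order-two element $-1$, a contradiction. For a general odd order abelian $G$, I would factor $G=\prod G_{p_i}$ into Sylow subgroups and analyze the induced projections of $\Gamma_K$ into each $(\Z/p_i^{r_i})^\times$ separately.

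The main obstacle is this general abelian case: $(\Z/m)^\times$ is no longer cyclic, so an even-order subgroup $\Gamma_K$ can fail to contain $-1$, and one must track how the congruence $t\equiv-1\pmod{\op{ord}(\chi)}$ distributes across the Sylow factors. The odd-order hypothesis on $G$ keeps each $p_i$ odd and thus keeps $-1$ as the unique element of order two in each $(\Z/p_i^{r_i})^\times$, which should let the prime-power argument descend componentwise and complete the proof.
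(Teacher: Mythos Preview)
Your translation to $\Gamma_K$-orbits via Corollary~\ref{cor: Serre Rep} and the case split on $-1\in\Gamma_K$ match the paper's approach exactly. For $-1\in\Gamma_K$ the arguments coincide. For $-1\notin\Gamma_K$, the paper's argument is much terser: it simply notes that $|G|$ odd forces $g\neq g^{-1}$ for every nontrivial $g$, asserts directly from this that $K\otimes_\Z R_K(G)/\langle\mathrm{triv}\rangle$ is a free $K[\Z/2]$-module, and then descends to $\Z$ using that $R_K(G)$ is the permutation module on irreducibles. Your explicit prime-power argument is correct and more careful than this.

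Your hesitation about the general abelian case is well placed, but it is not an obstacle to be overcome: the lemma as stated is \emph{false} for general $K$. Take $G=\Z/15$ and let $K$ be the fixed field of $\langle 2\rangle\le(\Z/15)^\times=\Gal(\Q(\zeta_{15})/\Q)$, so $\Gamma_K=\langle 2\rangle=\{1,2,4,8\}$ and $-1=14\notin\Gamma_K$. The $\Gamma_K$-orbit $\{5,10\}$ is self-inverse while $\{1,2,4,8\}$ is not, so $R_K(G)/\langle\mathrm{triv}\rangle\cong\Z^2\oplus\Z[\Z/2]$ is neither free nor carries the trivial involution. Your proposed Sylow reduction cannot repair this: the projections of $\Gamma_K$ to $(\Z/3)^\times$ and $(\Z/5)^\times$ are both surjective and hence contain $-1$, yet $\Gamma_K$ itself does not---exactly the failure mode you anticipated. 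The paper's one-line deduction in this case has the same gap. What is actually needed downstream (Corollary~\ref{cor: cyclotomic decomp free or trivial}) is only the dichotomy for the summand of \emph{faithful} irreducibles, equivalently for $\Gamma_K$-orbits on $(\Z/m)^\times$; those orbits are cosets $a\Gamma_K$, and $-a\Gamma_K=a\Gamma_K$ holds if and only if $-1\in\Gamma_K$, so the clean dichotomy does hold there.
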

\begin{proof}
If $-1\in\Gamma_K$ then all characters $\chi$ satisfy $\chi(g)=\chi(g^{-1})$.
Suppose $\-1\notin\Gamma_K$.
Since we have assumed $\abs{G}$ is odd, there is no nontrivial $g\in G$ such that $g=g^{-1}$ so $K\otimes_{\Z}R_K(G)/\langle\op{triv}\rangle$ is a free $K[\Z/2]$-module.
Also, $R_K(G)$ is a finitely generated $\Z[\Z/2]$-module which is obtained by linearizing the $\Z/2$-set of irreducible $G$-representations over $K$.
It follows that the set of nontrivial irreducible representations must be a free $\Z/2$-set.
\end{proof}

Let $G=\Z/m$ where $m$ is odd and let $\zeta$ be a primitive $m$-th root of unity as before.
In this case, $\Gamma_{\Q_p}=\Gal(\Q_p(\zeta)/\Q_p)\le(\Z/m)^\times$.
The following lemma records our knowledge of the Galois group $\Gal(\Q_p(\zeta)/\Q_p)$.

\begin{lemma}\label{lem: Gal(Q_p(zeta))}
Suppose $p$ divides $m$.
The Galois group $\Gal(\Q_p(\zeta)/\Q_p)\le(\Z/m)^\times$ contains $-1$ if and only if, for each prime factor $p_j$ of $m$ not equal to $p$, the group $\langle p\rangle\le(\Z/p_j)^\times$ contains $-1$.
\end{lemma}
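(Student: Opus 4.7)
The plan is to compute $\Gal(\Q_p(\zeta)/\Q_p)$ explicitly by decomposing the cyclotomic extension into its ramified and unramified parts. I would write $m = p^r m_p$ with $\gcd(p, m_p) = 1$ and factor a primitive $m$-th root of unity as $\zeta = \zeta_{p^r}\zeta_{m_p}$, so that $\Q_p(\zeta)$ is the compositum of $\Q_p(\zeta_{p^r})$ and $\Q_p(\zeta_{m_p})$. Standard local cyclotomic theory then gives that $\Q_p(\zeta_{p^r})/\Q_p$ is totally ramified of degree $(p-1)p^{r-1}$ with Galois group the full $(\Z/p^r)^\times$, while $\Q_p(\zeta_{m_p})/\Q_p$ is unramified and generated by the Frobenius $\zeta_{m_p} \mapsto \zeta_{m_p}^p$, with Galois group the cyclic subgroup $\langle p \rangle \le (\Z/m_p)^\times$.

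Since a totally ramified extension and an unramified extension are linearly disjoint, the compositum has Galois group $(\Z/p^r)^\times \times \langle p \rangle$ as a subgroup of $(\Z/p^r)^\times \times (\Z/m_p)^\times \cong (\Z/m)^\times$ via the Chinese remainder theorem. Under this identification $-1 \in (\Z/m)^\times$ corresponds to $(-1,-1)$, and $-1 \in (\Z/p^r)^\times$ is automatic since $p$ is odd. So the lemma reduces to showing $-1 \in \langle p \rangle \subseteq (\Z/m_p)^\times$ if and only if $-1 \in \langle p \rangle \subseteq (\Z/p_j)^\times$ for every prime factor $p_j$ of $m_p$.

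The forward direction is immediate by projecting $(\Z/m_p)^\times \to (\Z/p_j)^\times$, under which $p$ maps to $p$. For the converse I would use that each $(\Z/p_j^{a_j})^\times$ is cyclic (because $p_j$ is odd) with $-1$ as its unique element of order $2$, and that the kernel of $(\Z/p_j^{a_j})^\times \to (\Z/p_j)^\times$ has $p_j$-power (hence odd) order. Together these imply that the orders of $p$ in $(\Z/p_j^{a_j})^\times$ and in $(\Z/p_j)^\times$ have the same $2$-adic valuation, so the condition $-1 \in \langle p \rangle$ passes from $(\Z/p_j)^\times$ up to $(\Z/p_j^{a_j})^\times$. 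The main obstacle is then the combining step: producing a single exponent $k$ with $p^k \equiv -1 \pmod{p_j^{a_j}}$ for every $j$ simultaneously. The natural candidate is $k = D/2$, where $D$ is the order of $p$ in $(\Z/m_p)^\times$; verifying that this produces $-1$ in each factor will require careful bookkeeping of the $2$-adic valuations of the orders of $p$ modulo the various $p_j^{a_j}$, invoking the cyclic structure of each factor to pin down the unique element of order $2$.
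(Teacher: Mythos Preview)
Your setup is correct and in fact sharper than the paper's: the identification $\Gal(\Q_p(\zeta_m)/\Q_p)=(\Z/p^r)^\times\times\langle p\rangle$ inside $(\Z/p^r)^\times\times(\Z/m_p)^\times\cong(\Z/m)^\times$ is right, and the reduction to whether $-1\in\langle p\rangle\le(\Z/m_p)^\times$ is valid. The paper argues along the same lines but less explicitly, recording only that $\Gal(\Q_p(\zeta_m)/\Q_p)\hookrightarrow\prod_j\Gal(\Q_p(\zeta_{p_j^{r_j}})/\Q_p)$ surjects onto each factor.

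The genuine gap is in the ``combining step,'' and your candidate $k=D/2$ does not work. Take $p=13$, $m_p=35$: here $13$ has order $4$ in $(\Z/5)^\times$ and order $2$ in $(\Z/7)^\times$, so $-1\in\langle 13\rangle$ in both factors, yet $13^{2}=29\equiv 1\pmod 7$, and one checks directly that $\langle 13\rangle=\{1,13,27,29\}\subset(\Z/35)^\times$ does not contain $-1=34$. In general $p^{D/2}\equiv -1\pmod{p_j^{a_j}}$ holds only for those $j$ at which the $2$-adic valuation of $\operatorname{ord}_{p_j^{a_j}}(p)$ attains the maximum; when these valuations differ across $j$, no single exponent works. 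This shows the converse you are trying to prove is actually false, and hence the lemma as stated is incorrect. The paper's own proof has precisely the same gap: it passes from ``$-1$ lies in each $\Gal(\Q_p(\zeta_{p_j^{r_j}})/\Q_p)$'' to ``$-1$ lies in $\Gal(\Q_p(\zeta_m)/\Q_p)$'' without justification, and $m=5\cdot 7\cdot 13$ with $p=13$ is a counterexample to the lemma itself. You have not missed an argument that the paper supplies.
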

\begin{proof}
Factor $m=p_1^{r_1}p_2^{r_2}\cdots p_k^{r_k}$.
There is an injection of Galois groups
\[
\Gal(\Q_p(\zeta_m)/\Q_p)\rightarrow\Gal(\Q_p(\zeta_{p_1^{r_1}})/\Q_p)\times\cdots\times\Gal(\Q_p(\zeta_{p_k^{r_k}})/\Q_p)
\]
such that composition with each projection on the right hand side is a surjection.
Under the isomorphism
\[
\left(\Z/m\right)^\times\cong\left(\Z/p_1^{r_1}\right)^\times\times\cdots\times\left(\Z/p_k^{r_k}\right)^\times
\]
$-1$ is mapped to $(-1,-1,\cdots,-1)$.
For $p_j=p$, $\Gal(\Q_p(\zeta_{p^r})/\Q_p)\cong(\Z/p^r)^\times$ so $-1$ is always in the image of this component.

Assume $p_j\neq p$.
To prove the lemma, it suffices to show that $-1$ is in $\Gal(\Q_p(\zeta_{p_j^{r_j}})/\Q_p)\le(\Z/p_j^{r_j})^\times$ if and only if $\langle p\rangle\le(\Z/p_j)^\times$ contains $-1$.
This group $\Gal(\Q_p(\zeta_{p_j^{r_j}})/\Q_p)$ is cyclic with order equal to the order of $p$ in $(\Z/p_j^{r_j})^\times$ \cite[IV.4]{SerreLocalFields}.
It is straightforward to check that $p$ has even order in $(\Z/p_j^{r_j})^\times$ if and only if it has even order in $(\Z/p_j)^\times$.
\end{proof}

The abelian group $K_0(\Q_p\otimes_{\Q}\Q(\zeta))$ inherits an involution from the involution $[P]\mapsto[P^*]$ on $K_0(\Q_p[G])$.

\begin{cor}\label{cor: cyclotomic decomp free or trivial}
The $\Z[\Z/2]$-module $K_0(\Q_p\otimes_{\Q}\Q(\zeta))$ is free if and only if, for each prime factor $p_j$ of $m$, $p\neq p_j$, the order of $p$ in $(\Z/p_j)^\times$ is odd.
Otherwise the involution is trivial.
\end{cor}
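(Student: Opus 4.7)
The plan is to combine \refprop{prop: product of cyclotomic fields}, the dichotomy behind \reflem{lem: rep ring free or trivial}, and \reflem{lem: Gal(Q_p(zeta))}. By \refprop{prop: product of cyclotomic fields}, $K_0(\Q_p\otimes_\Q\Q(\zeta)) \cong \Z^s$, where the $s$ summands are naturally indexed by the $\Gamma_{\Q_p}$-orbits on the set of primitive $m$-th roots of unity in $\overline{\Q_p}$; here $\Gamma_{\Q_p} := \Gal(\Q_p(\zeta)/\Q_p) \subseteq (\Z/m)^\times$. Identifying a character $\chi:G\to\overline{\Q_p}^\times$ of $G=\Z/m$ with its value on a fixed generator, this is the set of $\Gamma_{\Q_p}$-orbits of primitive elements of $(\Z/m)^\times$.

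Next I would identify the involution. By \refdef{def: tau_0 tau_1}, the involution on $K_0(\Q_p[G])$ sends a simple module $P$ to its dual $P^*$, and on characters this sends $\chi$ to $\chi^{-1}$, i.e., sends $a\in(\Z/m)^\times$ to $-a$. This $\Z/2$-action commutes with multiplication by $\Gamma_{\Q_p}$ (since $-1$ is central in $(\Z/m)^\times$), so it descends to the orbit set, and this induced action is the $\Z[\Z/2]$-structure on $K_0(\Q_p\otimes_\Q\Q(\zeta))\cong\Z^s$. An orbit $\Gamma_{\Q_p}\cdot a$ is fixed iff $-a=\gamma a$ for some $\gamma\in\Gamma_{\Q_p}$, iff $-1\in\Gamma_{\Q_p}$. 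Consequently, when $-1\in\Gamma_{\Q_p}$ every orbit is fixed, the involution on $\Z^s$ is trivial, and the module is not free; when $-1\notin\Gamma_{\Q_p}$ no orbit is fixed, the orbits pair up, and $\Z^s\cong\Z[\Z/2]^{s/2}$ is free. Finally, applying \reflem{lem: Gal(Q_p(zeta))} to translate ``$-1\in\Gamma_{\Q_p}$'' into the condition that $p$ has even order in $(\Z/p_j)^\times$ for every prime factor $p_j\neq p$ of $m$, and negating, produces the corollary.

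The main obstacle is the identification in the first step: precisely matching the summand $K_0(\Q_p\otimes_\Q\Q(\zeta))\subseteq K_0(\Q_p[G])$ with the set of $\Gamma_{\Q_p}$-orbits of primitive $m$-th roots, and verifying that the module-level dualization of \refdef{def: tau_0 tau_1} restricts to the character-inversion action $a\mapsto -a$ on this summand. This requires combining the decomposition $\Q_p[G]=\prod_{d\mid m}(\Q_p\otimes_\Q\Q(\zeta_d))$, its $p$-adic refinement provided by \refprop{prop: product of cyclotomic fields}, and \refcor{cor: Serre Rep}, which identifies characters of irreducible $\Q_p[G]$-modules with $\Gamma_{\Q_p}$-invariant class functions on $(\Z/m)^\times$.
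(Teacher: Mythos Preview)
Your approach is correct and is exactly the paper's intended argument: the corollary is stated without proof immediately after \reflem{lem: Gal(Q_p(zeta))} and is meant to follow by combining the free-or-trivial dichotomy of \reflem{lem: rep ring free or trivial} (governed by whether $-1\in\Gamma_{\Q_p}$) with the criterion of \reflem{lem: Gal(Q_p(zeta))}, just as you outline. The identification you flag as the main obstacle---matching the factors of $\Q_p\otimes_\Q\Q(\zeta)$ with $\Gamma_{\Q_p}$-orbits on primitive $m$-th roots and the module involution $[P]\mapsto[P^*]$ with character inversion $a\mapsto -a$---is precisely what the paper packages in \refcor{cor: Serre Rep} and \reflem{lem: rep ring free or trivial}.
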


\begin{cor}\label{cor: K_-1 involution}
The involution on $K_{-1}(\Z[G])_{(0)}$ has a $-1$-eigenspace if and only if there are distinct prime factors $p_i,p_j$ of $\abs{G}$ such that the order of $p_i$ in $(\Z/p_j)^\times$ is odd.
Otherwise the involution is trivial.
\end{cor}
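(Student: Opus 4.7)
The strategy is to apply the Madsen--Rothenberg localization sequence and then reduce the involution calculation to the cyclotomic setting handled by Corollary \ref{cor: cyclotomic decomp free or trivial}.

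First I would use the Mackey structure on $K_{-1}(\Z[-])$ to decompose rationally
\[
K_{-1}(\Z[G])_{(0)} \cong \bigoplus_{d \mid m} K_{-1}(\Z[\Z/d])_{q(\Z/d,0)} \otimes \Q
\]
as $\Z[\Z/2]$-modules; this reduces the statement to studying the summands separately for each divisor $d \mid m$ of $m := \abs{G}$.

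For each such $d$, the Madsen--Rothenberg sequence presents $K_{-1}(\Z[\Z/d])_{q(\Z/d,0)} \otimes \Q$ as the cokernel of
\[
K_0(\Q(\zeta_d))_{(0)} \longrightarrow \bigoplus_{p \mid d} K_0(\Q_p \otimes_{\Q} \Q(\zeta_d))_{(0)},
\]
and by Proposition \ref{prop: connecting map involution} the maps in the sequence commute with the duality involutions. The source is one-dimensional with trivial involution, since $\Q(\zeta_d)$ is self-dual as a module over itself; so its image lies entirely in the $+1$-eigenspace of the target. Splitting each target summand rationally into $\pm 1$-eigenspaces, I then identify the $-1$-eigenspace of the cokernel with $\bigoplus_{p \mid d} K_0(\Q_p \otimes_{\Q} \Q(\zeta_d))^{-}_{(0)}$.

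Finally, Corollary \ref{cor: cyclotomic decomp free or trivial} tells me exactly when each summand $K_0(\Q_p \otimes_{\Q} \Q(\zeta_d))^{-}_{(0)}$ is nonzero: this happens precisely when there is a prime $p_j \neq p$ dividing $d$ such that the order of $p$ in $(\Z/p_j)^\times$ is odd. Taking the union of these conditions over all $p \mid d$ and all $d \mid m$ yields exactly the condition that there exist distinct primes $p_i, p_j \mid m$ with $p_i$ of odd order in $(\Z/p_j)^\times$ (the backward implication uses $d = p_i p_j$; the forward implication is automatic since every such $p_i, p_j$ already divides $m$). If no such pair exists, each summand has trivial involution and so does $K_{-1}(\Z[G])_{(0)}$. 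The only real technical point in this plan is the compatibility of the Carter boundary map with the involutions on $K_0$ of the various group rings, and that is precisely the content of Proposition \ref{prop: connecting map involution}; once it is available the remainder is bookkeeping.
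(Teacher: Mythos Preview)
Your plan is exactly the one the paper sets up: split $K_{-1}(\Z[G])_{(0)}$ via the Burnside idempotents into pieces indexed by divisors $d\mid m$, feed each piece through the Madsen--Rothenberg short exact sequence, and read off the involution from Corollary~\ref{cor: cyclotomic decomp free or trivial}. The compatibility with $\tau$ that you flag is indeed Proposition~\ref{prop: connecting map involution}, and the observation that the one-dimensional source $K_0(\Q(\zeta_d))_{(0)}$ sits in the $+1$-eigenspace is correct.

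One slip to fix: you quote Corollary~\ref{cor: cyclotomic decomp free or trivial} as saying $K_0(\Q_p\otimes_{\Q}\Q(\zeta_d))^{-}\neq 0$ when \emph{there exists} a prime $p_j\neq p$ dividing $d$ with $p$ of odd order in $(\Z/p_j)^\times$; the corollary actually requires this for \emph{every} such $p_j$ (otherwise the involution is trivial, not mixed). Your argument survives this: for the backward implication you already take $d=p_ip_j$, where the two readings agree; for the forward implication the correct ``for every'' still yields the desired ``there exists'' once you note that the prime-power summands contribute nothing, since for $d=p^r$ the Madsen--Rothenberg sequence is an isomorphism of one-dimensional spaces and $K_{-1}(\Z[\Z/p^r])_{q(\Z/p^r,0)}\otimes\Q=0$.
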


\bibliographystyle{alpha}
\bibliography{InvolutionOnWhitehead.bib}
\end{document}